\newtheorem*{definition}{Definition}
\newtheorem{theorem}{Theorem}
\newtheorem{lemma}[theorem]{Lemma}
\newtheorem{conjecture}[theorem]{Conjecture}
\newtheorem{proposition}[theorem]{Proposition}
\newtheorem{claim}{Claim}
\newtheorem{corollary}[theorem]{Corollary}
\DeclareMathOperator{\Cay}{Cay}
\DeclareMathOperator{\sgn}{sgn}
\begin{document}
\title{Forbidding just one intersection,\\for permutations.}
\author{David Ellis\footnote{School of Mathematical Sciences, Queen Mary, University of London, Mile End Road, London, E1 4NS, United Kingdom. E-mail: \texttt{D.Ellis@qmul.ac.uk.}}}
\date{October 2013}
\maketitle

\begin{abstract}
We prove that for \(n\) sufficiently large, if $\mathcal{A}$ is a family of permutations of \(\{1,2,\ldots,n\}\) with no two permutations in $\mathcal{A}$ agreeing exactly once, then $|\mathcal{A}| \leq (n-2)!$, with equality holding only if \(\mathcal{A}\) is a coset of the stabilizer of 2 points. We also obtain a Hilton-Milner type result, namely that if \(\mathcal{A}\) is such a family which is not contained within a coset of the stabilizer of 2 points, then it is no larger than the family
\begin{eqnarray*}
\mathcal{B} & = & \{\sigma \in S_{n}:\ \sigma(1)=1,\sigma(2)=2,\ \#\{\textrm{fixed points of }\sigma \geq 5\} \neq 1\}\\
&& \cup \{(1\ 3)(2\ 4),(1\ 4)(2\ 3),(1\ 3\ 2\ 4),(1\ 4\ 2\ 3)\}
\end{eqnarray*}
We conjecture that for \(t \in \mathbb{N}\), and for \(n\) sufficiently large depending on \(t\), if $\mathcal{A}$ is family of permutations of $\{1,2,\ldots,n\}$ with no two permutations in $\mathcal{A}$ agreeing exactly \(t-1\) times, then \(|\mathcal{A}| \leq (n-t)!\), with equality holding only if \(\mathcal{A}\) is a coset of the stabilizer of \(t\) points. This can be seen as a permutation analogue of a conjecture of Erd\H{o}s on families of $k$-element sets with a forbidden intersection, proved by Frankl and F\"uredi in \cite{franklfuredi}.
\end{abstract}

\section{Introduction}
Let \(X\) be an \(n\)-element set, and let \(X^{(k)}\) denote the collection of all \(k\)-element subsets of \(X\). We say a family \(\mathcal{A} \subset X^{(k)}\) is \(t\)-{\em intersecting} if any two sets in $\mathcal{A}$ share at least \(t\) elements, i.e. \(|x\cap y| \geq t\) for any \(x,y \in \mathcal{A}\). Erd\H{o}s, Ko and Rado \cite{tekr} proved in 1961 that if \(n\) is sufficiently large depending on \(k\) and \(t\), and \(\mathcal{A} \subset X^{(k)}\) is \(t\)-intersecting, then \(|\mathcal{A}| \leq {n-t \choose k-t}\), with equality holding only if \(\mathcal{A}\) is the family of all \(k\)-sets containing some fixed \(t\)-element subset of \(X\).

In \cite{erdosconj}, Erd\H{o}s asked what happens if we weaken the condition, and just forbid an intersection of size {\em exactly} \(t-1\). Frankl and F\"uredi \cite{franklfuredi} proved that for \(k \geq 2t\) and for \(n\) sufficiently large depending on \(k\), if $\mathcal{A} \subset X^{(k)}\) such that no two sets in $\mathcal{A}$ have intersection of size exactly \(t-1\), then $|\mathcal{A}| \leq {n-t \choose k-t}$, with equality holding only if \(\mathcal{A}\) is the family of all \(k\)-sets containing some fixed \(t\)-element subset of \(X\).

In this paper, we consider analogues of these problems for the symmetric group \(S_{n}\), the group of all permutations of \(\{1,2,\ldots,n\}=:[n]\). We say that a family of permutations \(\mathcal{A} \subset S_{n}\) is \(t\)-{\em intersecting} if any two permutations in $\mathcal{A}$ agree on at least $t$ points --- in other words, for all \(\sigma,\tau \in \mathcal{A}\), we have \(\#\{i:\ \sigma(i)=\tau(i)\} \geq t\).

Deza and Frankl \cite{dezafrankl} proved in 1977 that if $\mathcal{A} \subset S_n$ is 1-intersecting, then $|\mathcal{A}| \leq (n-1)!$. The case of equality turned out to be somewhat harder than one might expect; this was resolved in 2003 by Cameron and Ku \cite{cameron}, and independently by Larose and Malvenuto \cite{larose}, who proved that if $\mathcal{A} \subset S_n$ is an intersecting family of size $(n-1)!$, then $\mathcal{A}$ is a coset of the stabiliser of a point.

Deza and Frankl conjectured in \cite{dezafrankl} that for any $t \in \mathbb{N}$, if \(n\) is sufficiently large depending on \(t\), and \(\mathcal{A} \subset S_{n}\) is \(t\)-intersecting, then \(|\mathcal{A}| \leq (n-t)!\). This was proved in 2008, by the author and independently by Friedgut and Pilpel, using very similar techniques (specifically, eigenvalue methods, combined with the representation theory of $S_n$); we have written a joint paper, \cite{jointpaper}. We also proved that equality holds only if $\mathcal{A}$ is a {\em $t$-coset} of $S_n$ (meaning a coset of the stabiliser of $t$ points), again provided $n$ is sufficiently large depending on $t$.

Cameron and Ku \cite{cameron} conjectured that if \(\mathcal{A} \subset S_{n}\) is 1-intersecting, and $\mathcal{A}$ is not contained in any 1-coset, then $\mathcal{A}$ is no larger than the family
\[\{\sigma \in S_{n}:\ \sigma(1)=1,\ \sigma(j)=j \textrm{ for some }j>2\} \cup \{(1\ 2)\},\] 
which has size \((1-1/e+o(1))(n-1)!\). This was proved by the author in \cite{cameronkuconj}, using the representation theory of \(S_{n}\) combined with some combinatorial arguments. It can be seen as an analogue of the Hilton-Milner Theorem \cite{hiltonmilner} on 1-intersecting families of \(r\)-subsets of \(\{1,2,\ldots,n\}\). In \cite{dezafranklstability}, the author proved a generalization of the Cameron-Ku conjecture for \(t\)-intersecting families, namely that if \(\mathcal{A} \subset S_{n}\) is a \(t\)-intersecting family which is not contained within a coset of the stabilizer of \(t\) points, then \(\mathcal{A}\) is no larger than the family
\[\{\sigma: \sigma(i)=i\ \forall i \leq t,\ \sigma(j)=j\ \textrm{for some}\ j > t+1\} \cup \{(1\ t+1),\ldots,(t \ t+1)\}\]
which has size \((1-1/e+o(1))(n-t)!\). The proof uses similar ideas to in \cite{cameronkuconj}, but both the representation theory and the combinatorial arguments are somewhat more involved.

In this paper, we consider problem of forbidding just one intersection-size, for families of permutations. We say that two permutations $\sigma,\pi \in S_n$ {\em agree on exactly $k$ points} if $\#\{i \in [n]:\ \sigma(i)=\pi(i)\}=k$. We make the following conjecture.
\begin{conjecture}
\label{conj:all-t}
For any \(t \in \mathbb{N}\), and for \(n\) sufficiently large depending on \(t\), if $\mathcal{A} \subset S_n$ with no two permutations in $\mathcal{A}$ agreeing on exactly \(t-1\) points, then
\[|\mathcal{A}| \leq (n-t)!,\]
and equality holds only if \(\mathcal{A}\) is a \(t\)-coset of \(S_{n}\).
\end{conjecture}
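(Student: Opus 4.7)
The natural framing is via Cayley graphs: let $T_{t-1} \subset S_n$ denote the set of permutations with exactly $t-1$ fixed points, and let $\Gamma = \Cay(S_n, T_{t-1})$. Since $\sigma \pi^{-1} \in T_{t-1}$ iff $\sigma$ and $\pi$ agree on exactly $t-1$ points, the hypothesis on $\mathcal{A}$ says precisely that $\mathcal{A}$ is an independent set in $\Gamma$. The set $T_{t-1}$ is closed under conjugation (it is a union of conjugacy classes, those whose cycle type has exactly $t-1$ parts equal to $1$), so $\Gamma$ is a normal Cayley graph, and its eigenvalues are indexed by partitions $\lambda \vdash n$:
\[
\eta_\lambda \;=\; \frac{1}{f_\lambda} \sum_{\pi \in T_{t-1}} \chi_\lambda(\pi),
\]
each with multiplicity $f_\lambda^2$, where $f_\lambda$ and $\chi_\lambda$ are the dimension and character of the irreducible $S_n$-representation indexed by $\lambda$. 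The plan is to apply the Hoffman (ratio) bound
\[
|\mathcal{A}| \;\leq\; \frac{n!\,|\eta_{\min}|}{d + |\eta_{\min}|}, \qquad d = |T_{t-1}| = \binom{n}{t-1} D_{n-t+1},
\]
and show that the right-hand side equals $(n-t)!$ for $n$ large depending on $t$.

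The core calculation is identifying $\eta_{\min}$. For the Hoffman bound to hit the target $(n-t)!$ one needs exactly $d/|\eta_{\min}| = n(n-1)\cdots(n-t+1) - 1$. My approach is to compute $\eta_\lambda$ via the Murnaghan--Nakayama rule together with inclusion--exclusion on fixed points (writing $\mathbf{1}_{T_{t-1}}$ as an alternating sum of indicators of ``at least $k$ fixed points'' for $k \geq t-1$), and then to invoke character-ratio bounds of Roichman and Larsen--Shalev to control the contribution of partitions $\lambda$ whose Young diagram lies far from the two natural extreme shapes $(n-t,1^t)$ and $(n-t,t)$. Guided by the $t$-intersecting analogue in \cite{jointpaper}, one expects $\eta_{\min}$ to be attained near one of these extremes with the required ratio, making the Hoffman bound tight on each $t$-coset.

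For the equality case, the Hoffman bound forces the indicator $\mathbf{1}_{\mathcal{A}}$ into the direct sum of the trivial eigenspace and the $\eta_{\min}$-eigenspace; this combined subspace is precisely the span of all $t$-coset indicators, equivalently the sum of the $S_n \times S_n$-isotypic components corresponding to partitions $\lambda$ with $\lambda_1 \geq n-t$. A Boolean-function argument, along the lines of \cite{jointpaper} and \cite{dezafranklstability}, then shows that any $\{0,1\}$-valued function in this space of weight $(n-t)!$ is the indicator of a single $t$-coset. The Hilton--Milner analogue for $t=2$ would follow from an additional stability step: first show that any non-coset family obeying the forbidden-intersection condition and close to extremal size must essentially resemble $\{\sigma : \sigma(1)=1,\,\sigma(2)=2\}$; then a finite combinatorial case analysis pins down the admissible perturbations --- the four extra permutations $(1\ 3)(2\ 4),\ldots$ being precisely the non-identity elements whose agreement size with every member of that stabilizer avoids the value $1$.

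The chief obstacle is the uniform control of $\eta_\lambda$ over the ``middle'' partitions $\lambda$. In the $t$-intersecting setting the connection set $\bigcup_{k<t} T_k$ has an indicator with a comparatively clean expansion in class functions; here we are restricted to a single layer $T_{t-1}$, and the inclusion--exclusion identity produces a highly oscillatory character sum. For $t = 2$ the alternating sum is short enough that Murnaghan--Nakayama yields sharp estimates for every $\lambda$, and this is presumably the route the paper takes; for $t \geq 3$ the required character-ratio bounds become substantially more delicate, which is almost certainly why the full conjecture is left open.
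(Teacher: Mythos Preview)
The statement is a \emph{conjecture}; the paper proves only the case $t=2$, and explicitly says the approach you outline cannot settle $t\geq 3$. More importantly, your central premise --- that Hoffman's bound on $\Gamma=\Cay(S_n,T_{t-1})$ hits $(n-t)!$ exactly --- is false already for $t=2$. The paper computes the eight non-medium eigenvalues and finds
\[
\lambda_{\min}=-(1+O(1/n))\,\frac{nd_{n-1}}{n^2},
\]
so Hoffman gives only $(1+O(1/n))(n-2)!$, not $(n-2)!$. There is no choice of $\lambda$ with $\eta_\lambda=-d/\bigl(n(n-1)\cdots(n-t+1)-1\bigr)$; the ratio bound is simply not tight for this connection set. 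The exact bound for $t=2$ is obtained by a separate stability argument: the approximate Hoffman bound plus a stability version of it forces a large family to have a rich slice inside some $2$-coset, and then cross-versions of the asymptotic bound, combined with the $1$-intersecting stability theorem from \cite{cameronkuconj}, bootstrap this to the exact result. Your proposal skips this entire second phase.

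Two further specific errors. First, your equality-case claim that ``trivial $\oplus$ $\eta_{\min}$-eigenspace equals the span of $t$-coset indicators'' fails here: for $t=2$ the span of $2$-coset indicators is $U_{(n)}\oplus U_{(n-1,1)}\oplus U_{(n-2,2)}\oplus U_{(n-2,1,1)}$, but $\eta_{(n-1,1)}=0$, so $U_{(n-1,1)}$ is not in the minimum eigenspace. (In the $t$-intersecting problem the connection set is $\bigcup_{k<t}T_k$ and this alignment does hold; restricting to a single layer $T_{t-1}$ destroys it.) Second, for $t\geq 3$ the obstruction is not that the character estimates are ``delicate'': the paper states that Hoffman --- and even Lov\'asz' $\vartheta$-function with conjugation-invariant weights --- yields only $O((n-2)!)$, because the minimum eigenvalue genuinely scales like $d/n^2$, not $d/n^t$. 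No refinement of the eigenvalue computation will rescue the approach; new techniques are required.
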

This is a natural permutation analogue of the above-mentioned conjecture of Erd\H{o}s on families of $k$-element sets with a forbidden intersection.

Of course, a family of permutations in which no two permutations disagree everywhere is precisely a 1-intersecting family, so the $t=1$ case of Conjecture \ref{conj:all-t} holds for all $n$, by the above-mentioned results of Deza and Frankl and of Cameron and Ku.

In this paper, we prove the $t=2$ case of Conjecture \ref{conj:all-t}.
\begin{theorem}
\label{thm:main}
If $n$ is sufficiently large, and $\mathcal{A} \subset S_n$ is a family of permutations with no two permutations in $\mathcal{A}$ agreeing at exactly one point, then
$$|\mathcal{A}| \leq (n-2)!,$$
and equality holds only if $\mathcal{A}$ is a $2$-coset of $S_n$.
\end{theorem}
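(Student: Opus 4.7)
Set $T_1 = \{\pi \in S_n : \#\{i:\pi(i)=i\}=1\}$, so that the hypothesis on $\mathcal{A}$ is precisely that $\mathcal{A}$ is an independent set in the Cayley graph $\Gamma = \Cay(S_n, T_1)$. A $2$-coset achieves $(n-2)!$ (its elements pairwise agree on exactly the two common fixed positions), so the task is the matching upper bound together with the extremal characterization.

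The plan is to deploy a weighted eigenvalue method in the spirit of \cite{jointpaper}. Consider the pseudo-adjacency matrix $M = \sum_\mu w_\mu A_{C_\mu}$, summed over the conjugacy classes $C_\mu \subset T_1$ (cycle types $\mu \vdash n$ with exactly one $1$-cycle and every other part $\geq 2$), with scalar weights $w_\mu$ to be tuned. Since $M$ comes from convolution by a class function on $S_n$, it is normal with eigenvalues
\[
\eta_\lambda \;=\; \frac{1}{\dim\chi^\lambda}\sum_\mu w_\mu\,|C_\mu|\,\chi^\lambda(C_\mu) \qquad (\lambda\vdash n),
\]
of multiplicity $(\dim\chi^\lambda)^2$; the $\eta_{(n)}$-eigenspace is the line of constants. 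Because $M$ vanishes off the edges of $\Gamma$, independence forces $\mathbf{1}_\mathcal{A}^\top M \mathbf{1}_\mathcal{A} = 0$, and decomposing $\mathbf{1}_\mathcal{A}$ into isotypic components yields the Hoffman-type bound $|\mathcal{A}| \leq n!\,(-\eta_{\min})/(\eta_{(n)} - \eta_{\min})$. A very clean simplification is that $\chi^{(n-1,1)}(\pi) = \#\mathrm{Fix}(\pi)-1 = 0$ for every $\pi \in T_1$, so $\eta_{(n-1,1)} = 0$ regardless of the choice of weights. Combined with asymptotic character bounds (Murnaghan--Nakayama together with Roichman / Larsen--Shalev), which show that the normalised characters $\chi^\lambda(C_\mu)/\dim\chi^\lambda$ are negligible for every $\lambda \vdash n$ outside the four ``small'' partitions $(n),(n-1,1),(n-2,2),(n-2,1,1)$, this restricts the spectral analysis to a finite-dimensional optimisation: tune the $w_\mu$ so that $\eta_{\min}$ is attained simultaneously at $(n-2,2)$ and $(n-2,1,1)$, making the Hoffman bound sharp at $(n-2)!$.

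A subtlety is that $\eta_{(n-1,1)} = 0 > \eta_{\min}$ forces the $(n-1,1)$-component of any extremal $\mathbf{1}_\mathcal{A}$ to vanish, whereas the indicator of a $2$-coset has a non-zero $(n-1,1)$-component, so the extremal characterization cannot be read off from pure Hoffman-equality alone. I would bridge this by a combinatorial dichotomy: the spectral step gives $|\mathcal{A}| \leq (n-2)!$ and locates $\mathbf{1}_\mathcal{A}$ approximately inside the isotypic span of $(n),(n-2,2),(n-2,1,1)$, and then I split into (i) $\mathcal{A}$ is $2$-intersecting --- in which case the $t=2$ case of the Deza--Frankl conjecture from \cite{jointpaper} gives both the bound and the characterization as a $2$-coset --- and (ii) $\mathcal{A}$ contains a pair $\sigma,\tau$ of derangement-different permutations, in which case every other $\rho \in \mathcal{A}$ must satisfy $F(\sigma^{-1}\rho) \neq 1$ \emph{and} $F(\tau^{-1}\rho) \neq 1$, and a quantitative counting of these simultaneous constraints (using the approximate isotypic description from the spectral step) should force $|\mathcal{A}| < (n-2)!$ strictly. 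The hard part will be case (ii): the simple counts $\#\{\rho: F(\sigma^{-1}\rho)\neq 1\}$ and $\#\{\rho: F(\tau^{-1}\rho)\neq 1\}$ are each of order $(1-1/e)n!$, far larger than $(n-2)!$, so the combinatorial cleanup must invoke the refined spectral information (not just the raw counts) to beat $(n-2)!$. This iteration between the spectral bound and the derangement-pair analysis is, I expect, the main technical load of the proof.
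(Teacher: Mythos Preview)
Your observation that $\eta_{(n-1,1)}=0$ for every choice of class weights is correct, but it actually torpedoes your plan rather than merely complicating the extremal characterisation. If the weighted Hoffman bound were equal to $(n-2)!$, then any $2$-coset (which has size $(n-2)!$) would be an equality case, forcing its centred indicator $\mathbf{1}_X - |X|/n!$ to lie entirely in the $\eta_{\min}$-eigenspace. But a $2$-coset indicator has nonzero $U_{(n-1,1)}$-component: for $X=\{\sigma:\sigma(1)=1,\sigma(2)=2\}$ one computes $\langle \mathbf{1}_X,\mathbf{1}_{\sigma(1)=1}-1/n\rangle = (n-2)!/n! - (1/n)(n-2)!/n! \neq 0$. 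Since $\eta_{(n-1,1)}=0\neq\eta_{\min}$, equality is impossible, so the weighted Hoffman bound is \emph{strictly} larger than $(n-2)!$ for every conjugation-invariant weighting. The spectral step therefore yields only $|\mathcal{A}|\le (1+O(1/n))(n-2)!$, not the exact bound; the paper says as much in its introduction.

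This means your dichotomy has to carry the full weight of closing the $O(1/n)$ gap, and as written case (ii) does not do this. Knowing that $\mathcal{A}$ contains a pair $\sigma,\tau$ with $|\sigma\cap\tau|=0$ and that $\mathbf{1}_{\mathcal{A}}$ is approximately supported on $U_{(n)}\oplus U_{(n-2,2)}\oplus U_{(n-2,1,1)}$ is not enough by itself to push $|\mathcal{A}|$ strictly below $(n-2)!$; the ``quantitative counting'' you allude to needs a genuine structural input. The paper supplies this via a stability theorem (Theorem~\ref{thm:roughstability}): from the spectral information one first shows $\mathcal{A}$ has a $2$-coset slice of size $\Omega(\sqrt{n/\log n})(n-4)!$, then uses several cross-family product bounds (Lemmas~\ref{lemma:twoposvalues} and~\ref{lemma:3posvalues}, derived from a two-family Hoffman bound) and the author's earlier stability result for $1$-intersecting families to conclude $|\mathcal{A}\setminus\mathcal{A}_{i\mapsto j,k\mapsto l}|=O_c((n-3)!)$. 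Only then does the bootstrap work: a single permutation outside the $2$-coset would exclude $(1/e+o(1))(n-2)!$ permutations from inside it, contradicting $|\mathcal{A}|\ge c(n-2)!$ for any $c>1-1/e$. Your sketch jumps over this entire stability machinery, which is where the actual work lies.
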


We also prove a corresponding stability result and a Hilton-Milner type result. We use similar techniques to in \cite{cameronkuconj} and \cite{jointpaper} --- namely, eigenvalue techniques, combined with the representation theory of $S_n$ --- but these techniques do not work quite so cleanly in the present case. Indeed, they are only capable of proving asymptotic results, and must be combined with stability arguments to prove the exact bound in Theorem \ref{thm:main}.

Unfortunately, for each $t \geq 3$ in Conjecture \ref{conj:all-t}, our techniques yield only a bound of $O((n-2)!)$. This remains true even if one uses `weighted' versions of Hoffman's bound, such as Lov\'asz' theta-function bound (see for example \cite{lovasz}), together with conjugation-invariant weightings. (While Lov\'asz' theta-function concerns arbitrary weightings, not just conjugation-invariant ones, conjugation-invariance is necessary if one wishes to use representation theory to analyse eigenvalues and eigenspaces, via Theorem \ref{thm:normalcayley} below.) It seems that new techniques will be required to solve the problem for \(t \geq 3\).

\section{Notation, background and tools}
In this section, we outline our notation, recall the tools we will use to prove our main results, and give some background on the representation theory of $S_n$.

From now on, we will often abbreviate the condition `no two permutations in \(\mathcal{A}\) agree at exactly one point' to `\(\mathcal{A}\) has no singleton intersection'.

Let $\Gamma_n$ be the graph on $S_n$ where two permutations \(\sigma\) and \(\tau\) are joined if and only if they agree at exactly one point. This is the Cayley graph on \(S_{n}\) generated by the set
\[\mathcal{E}_{n}=\{\sigma \in S_{n}:\ \sigma \textrm{ has exactly one fixed point}\}.\]
For \(n \in \mathbb{N}\), let \(d_{n}\) denote the number of derangements of \(n\), i.e. permutations in \(S_{n}\) without fixed points; the inclusion-exclusion formula yields the familiar identity
\[d_{n}=\sum_{i=0}^{n}(-1)^{i}\frac{n!}{i!} = (1/e+o(1))n!.\]
Note that
$$|\mathcal{E}_{n}|=nd_{n-1} = (1/e+o(1))n!,$$
so \(\Gamma_n\) is \(nd_{n-1}\)-regular.

Observe that a family \(\mathcal{A} \subset S_{n}\) in which no two permutations agree at exactly one point, is precisely an {\em independent set} in \(V(\Gamma_n)\), meaning a set of vertices of \(\Gamma_n\) with no edges of \(\Gamma_n\) between them. 

Our first step will be to apply Hoffman's eigenvalue bound to \(\Gamma_n\).
\begin{theorem}[Hoffman, \cite{hoffman}]
\label{thm:hoffman}
Let \(\Gamma\) be a \(d\)-regular graph on \(N\) vertices, whose adjacency matrix \(A\) has eigenvalues \(d=\lambda_{1} \geq \lambda_{2} \geq \ldots \geq \lambda_{N}\). Then if \(X \subset V(\Gamma)\) is an independent set, we have
\[|X| \leq \frac{-\lambda_{N}}{d-\lambda_{N}}N.\]
\end{theorem}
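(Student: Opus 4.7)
The plan is to execute the standard short linear-algebraic argument: exploit the fact that the quadratic form $v^{T}Av$ can be evaluated in two genuinely different ways, where $v=\mathbf{1}_{X}$ is the characteristic vector of the independent set $X$, and the tension between these two evaluations is exactly what produces the bound.

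First I would fix an orthonormal eigenbasis $u_{1},\ldots,u_{N}$ of $A$ with $Au_{i}=\lambda_{i}u_{i}$. Because $\Gamma$ is $d$-regular, the all-ones vector $\mathbf{1}$ is an eigenvector of $A$ with eigenvalue $d=\lambda_{1}$, so I may take $u_{1}=\mathbf{1}/\sqrt{N}$. Expanding $v=\sum_{i=1}^{N}c_{i}u_{i}$, a direct computation gives $c_{1}=\langle v,u_{1}\rangle=|X|/\sqrt{N}$ together with $\sum_{i=1}^{N}c_{i}^{2}=\|v\|^{2}=|X|$, so in particular $\sum_{i=2}^{N}c_{i}^{2}=|X|-|X|^{2}/N$.

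Now I would carry out the two evaluations of $v^{T}Av$. Since $X$ contains no edge of $\Gamma$, the matrix $A$ has a zero on every entry indexed by $X\times X$, and therefore $v^{T}Av=0$. On the other hand, using $\lambda_{i}\geq\lambda_{N}$ for all $i\geq 2$,
$$v^{T}Av \;=\; \sum_{i=1}^{N}\lambda_{i}c_{i}^{2} \;\geq\; d\cdot\frac{|X|^{2}}{N} \;+\; \lambda_{N}\left(|X|-\frac{|X|^{2}}{N}\right).$$
Setting the right-hand side at most zero and rearranging — noting that in any non-trivial case $\lambda_{N}<0$, so that $d-\lambda_{N}>0$ — yields $|X|(d-\lambda_{N})/N \leq -\lambda_{N}$, which is exactly the claimed bound.

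I do not anticipate any serious obstacle: this is a textbook Rayleigh-type computation, and the only minor care needed is to handle the degenerate edgeless case (where $\lambda_{N}=d$ and the bound is trivially $|X|\leq N$) and to keep track of the sign of $\lambda_{N}$ when dividing. Everything else is immediate from the orthonormal expansion of $v$ in the eigenbasis of $A$.
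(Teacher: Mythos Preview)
Your argument is correct and is the standard Rayleigh-quotient proof of Hoffman's bound. The paper does not actually prove Theorem~\ref{thm:hoffman} (it is merely cited), but it does prove a stability version, Lemma~\ref{lemma:stability-hoffman}, by precisely the same method you use: expand $\chi_{X}$ in an orthonormal eigenbasis with $u_{1}=\mathbf{f}$, use $\langle \chi_{X},A\chi_{X}\rangle=0$, and bound the remaining eigenvalues from below. The only cosmetic difference is that the paper works with the normalised inner product $\langle x,y\rangle=\tfrac{1}{N}\sum x_{i}y_{i}$, so its Fourier coefficients are rescaled relative to yours.
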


This will yield an approximate version of Theorem \ref{thm:main}. To apply Theorem \ref{thm:hoffman}, we will of course need to calculate the minimum eigenvalue of $\Gamma_n$. Since \(\mathcal{E}_{n}\) is a union of conjugacy-classes of \(S_{n}\), we may analyse the eigenvalues of $\Gamma$ using representation-theoretic techniques. Before outlining these, we first give some background on the representation theory of finite groups.

\subsection*{Background on the representation theory of finite groups}

If $G$ is a finite group, let $\mathbb{C}[G]$ denote the Euclidean space of all complex-valued functions on $G$, equipped with the inner product
$$\langle f,g \rangle = \frac{1}{|G|} \sum_{\sigma \in G} f(\sigma) \overline{g(\sigma)}\quad (f,g \in \mathbb{C}[G]),$$
and the corresponding Euclidean norm
$$||f||_2 = \sqrt{\frac{1}{|G|} \sum_{\sigma \in G} |f(\sigma)|^2} \quad (f \in \mathbb{C}[G]).$$
Recall that a {\em representation} of $G$ over $\mathbb{C}$ is a pair $(\rho,V)$, where $V$ is a finite-dimensional complex vector space, and $\rho$ is a homomorphism from $G$ to $GL(V)$, the group of all invertible linear endomorphisms of $V$. The {\em dimension} of the representation is the dimension of $V$. A representation $(\rho,V)$ is said to be {\em irreducible} if it has no proper subrepresentation, i.e. there is no proper subspace $V' \leq V$ such that $V'$ is $\rho(\sigma)$-invariant for all $\sigma \in G$. We say that two representations \((\rho,V)\) and \((\rho',V')\) are {\em isomorphic} if there is an invertible linear map \(\phi\colon V \to V'\) such that such that \(\phi(\rho(g)(v)) = \rho'(g)(\phi(v))\) for all \(g \in G\) and all \(v \in V\). In this case, we write $(\rho,V) \cong (\rho',V')$. 

Recall that, as a vector space, $\mathbb{C}[G]$ may be equipped with the {\em left-regular representation}, defined by
$$(\rho_{\textrm{reg}}(\pi) (f))(\sigma) = f(\pi^{-1} \sigma)\quad (f \in \mathbb{C}[G],\ \sigma,\pi \in G).$$

For any finite group $G$, we can choose a complete set \(\mathcal{R}\) of non-isomorphic complex irreducible representations of \(G\) --- i.e., a set of complex irreducible representations of $G$ containing exactly one member of each isomorphism class of complex irreducible representations of $G$. For each $\rho \in \mathcal{R}$, let $U_{\rho}$ denote the subspace of $\mathbb{C}[G]$ spanned by all isomorphic copies of $\rho$ in the left-regular representation. Then we have an orthogonal direct-sum decomposition
$$\mathbb{C}[G] = \bigoplus_{\rho \in \mathcal{R}} U_{\rho},$$
and $\dim(U_{\rho}) = (\dim(\rho))^2$ for all $\rho$.

We will use the following classical result to analyse the eigenvalues of $\Gamma$.

\begin{theorem}[Frobenius / Schur / Diaconis-Shahshahani \cite{diaconis}]
\label{thm:normalcayley}
Let \(G\) be a finite group, let \(X \subset G\) be an inverse-closed, conjugation-invariant subset of \(G\), let \(\Gamma = \Cay(G,X)\) be the Cayley graph on \(G\) with generating set \(X\), and let \(A\) be the adjacency matrix of \(\Gamma\). Let \(\mathcal{R}\) be a complete set of non-isomorphic complex irreducible representations of \(G\). Then each $U_{\rho}$ is an eigenspace of $\Gamma$, with corresponding eigenvalue 
\begin{equation}\label{eq:normalcayley}\lambda_{\rho} = \frac{1}{\dim(\rho)} \sum_{\sigma \in X} \chi_{\rho}(\sigma).\end{equation}
\end{theorem}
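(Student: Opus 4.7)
The plan is to pass to the group algebra $\mathbb{C}[G]$ and invoke Schur's lemma. Identifying $\mathbb{C}[G]$ with the formal $\mathbb{C}$-span of the vertices of $\Gamma$ via the basis $\{e_g:\ g \in G\}$, and setting $\mathbf{1}_X := \sum_{x \in X} x \in \mathbb{C}[G]$, a direct check shows that the adjacency operator $A$ is precisely left multiplication by $\mathbf{1}_X$:
\[A e_g \;=\; \sum_{h:\,h \sim g} e_h \;=\; \sum_{x \in X} e_{xg} \;=\; \mathbf{1}_X \cdot e_g.\]

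The key observation is that the hypothesis of conjugation-invariance of $X$ is exactly the statement that $\mathbf{1}_X$ lies in the centre of $\mathbb{C}[G]$. For any irreducible representation $(\rho,V_\rho)$ it follows that $\rho(\mathbf{1}_X)$ commutes with all of $\rho(G)$, so by Schur's lemma it must equal a scalar operator $\lambda_\rho \cdot \mathrm{id}_{V_\rho}$. Taking traces on both sides pins down the scalar: since $\mathrm{tr}(\rho(\mathbf{1}_X)) = \sum_{x \in X} \chi_\rho(x)$ while $\mathrm{tr}(\lambda_\rho \cdot \mathrm{id}_{V_\rho}) = \lambda_\rho \dim(\rho)$, we obtain the required formula $\lambda_\rho = \tfrac{1}{\dim(\rho)}\sum_{x\in X}\chi_\rho(x)$.

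To upgrade this to the statement that the isotypic component $U_\rho$ is an eigenspace of $A$ with eigenvalue $\lambda_\rho$, I would invoke the standard description of the left-regular representation: as a $G$-module, $U_\rho$ is isomorphic to a direct sum of $\dim(\rho)$ copies of $V_\rho$, and left multiplication by any $a \in \mathbb{C}[G]$ on $U_\rho$ acts, under this isomorphism, as the diagonal operator $\rho(a) \oplus \cdots \oplus \rho(a)$. Applying this with $a = \mathbf{1}_X$ and using $\rho(\mathbf{1}_X) = \lambda_\rho \cdot \mathrm{id}$ shows that $A$ acts as multiplication by $\lambda_\rho$ on the whole of $U_\rho$, as required.

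The argument is classical and there is no real obstacle; the only slightly subtle step is the last one, where one needs the Artin--Wedderburn decomposition $\mathbb{C}[G] \cong \bigoplus_\rho \mathrm{End}(V_\rho)$ (equivalently, the multiplicity-$\dim(\rho)$ decomposition of the left-regular representation into irreducibles) to go from ``$A$ preserves $U_\rho$'' (which is immediate from $\mathbf{1}_X$ being central, so that $A$ commutes with $\rho_{\mathrm{reg}}$) to ``$A$ acts as a \emph{single} scalar on all of $U_\rho$.'' Centrality alone would, via Schur applied to each irreducible summand individually, only give a scalar action on each summand; the Wedderburn picture is what guarantees these scalars all coincide with the same $\lambda_\rho$.
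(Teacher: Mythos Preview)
Your argument is correct and is exactly the standard proof of this classical fact. The paper itself does not supply a proof: it merely states the theorem and attributes it to Frobenius, Schur, and Diaconis--Shahshahani, so there is nothing to compare against beyond noting that your route (identify $A$ with left multiplication by the central element $\mathbf{1}_X$, apply Schur's lemma, take traces) is precisely the classical one.

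One small remark on your closing paragraph: you slightly overstate the role of Artin--Wedderburn. Once Schur gives you a scalar on each irreducible $G$-summand of $U_\rho$, the scalars automatically agree, simply because any $G$-isomorphism between two copies of $V_\rho$ intertwines the action of the central element $\mathbf{1}_X$. So ``Schur on each summand'' together with the defining property of an isotypic component (all summands $G$-isomorphic) already suffices; the Wedderburn picture is a convenient way to package this but is not logically required.
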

Here, $\chi_{\rho}$ denotes the {\em character} of $\rho$, i.e.
$$\chi_{\rho}(\sigma) = \textrm{Trace}(\rho(\sigma)).$$

\subsection*{Background and tools from the representation theory of $S_n$}

We will now give some brief background on the representation theory of $S_n$. For more detail, the reader may consult for example \cite{sagan}, or the exposition in \cite{jointpaper}.

As is well-known, there is an explicit one-to-one correspondence between irreducible representations of $S_n$ (up to isomorphism) and {\em partitions} of $n$.

\begin{definition} A \emph{partition} of \(n\) is a non-increasing sequence of positive integers summing to \(n\), i.e. a sequence $\alpha = (\alpha_1, \ldots, \alpha_l)$ with \(\alpha_{1} \geq \alpha_{2} \geq \ldots \geq \alpha_{l} \geq 1\) and \(\sum_{i=1}^{l} \alpha_{i}=n\).
\end{definition}
If $\alpha$ is a partition of $n$, we write \(\alpha \vdash n\). For example, \((3,2,2) \vdash 7\). We sometimes use the shorthand \((3,2,2) = (3,2^{2})\).
\begin{definition} The \emph{Young diagram} of $\alpha = (\alpha_1,\ldots,\alpha_l)$ is an array of $n$ cells, having $l$ left-justified rows where row $i$ contains $\alpha_i$ cells.
\end{definition}
For example, the Young diagram of the partition \((3,2^{2})\) is\\
$$\yng(3,2,2).$$
For each partition \(\alpha\) of \(n\), we may define an irreducible representation \(\rho_{\alpha}\) of \(S_{n}\) called the {\em Schur module} of \(\alpha\); the Schur modules \(\{\rho_{\alpha}:\ \alpha \vdash n\}\) form a complete set of non-isomorphic irreducible complex representations of \(S_{n}\). We write the corresponding character as \(\chi_{\alpha}\), and the corresponding dimension \(\dim(\rho_{\alpha}) = f^{\alpha}\).

An {\em \(\alpha\)-tableau} is a Young diagram of shape \(\alpha\), each of whose cells contains a different number between 1 and \(n\). For example,
$$\young(713,52,46)$$
is a $(3,2^2)$-tableau. We say that two \(\alpha\)-tableaux are {\em row-equivalent} if they contain the same numbers in each row. A row-equivalence-class of \(\alpha\)-tableaux is called a {\em \(\alpha\)-tabloid}. Consider the natural action of \(S_n\) on the set of \(\alpha\)-tabloids, and let \(M^{\alpha}\) denote the induced permutation representation. We write \(\xi_{\alpha}\) for the character of \(M_{\alpha}\); the \(\xi_{\alpha}\) are called the {\em permutation characters} of \(S_n\).

We can express the irreducible characters in terms of the permutation characters using the {\em determinantal formula}: for any partition \(\alpha\) of \(n\),
\begin{equation}\label{eq:determinantalformula} \chi_{\alpha} = \sum_{\pi \in S_{n}} \sgn(\pi) \xi_{\alpha - \textrm{id}+\pi}.\end{equation}
Here, if \(\alpha = (\alpha_{1},\alpha_{2},\ldots,\alpha_{l})\), \(\alpha - \textrm{id}+\pi\) is defined to be the sequence
\[(\alpha_{1}-1+\pi(1),\alpha_{2}-2+\pi(2),\ldots,\alpha_{l}-l+\pi(l)).\]
If this sequence has all its entries non-negative, we let \(\overline{\alpha-\textrm{id}+\pi}\) be the partition of \(n\) obtained by reordering its entries, and we define \(\xi_{\alpha - \textrm{id}+\pi} = \xi_{\overline{\alpha-\textrm{id}+\pi}}\). If the sequence has a negative entry, we define \(\xi_{\alpha - \textrm{id}+\pi} = 0\).

For any partition $\alpha \vdash n$, and any permutation $\sigma \in S_n$,  $\xi_{\alpha}(\sigma)$ is the trace of the permutation representation $M^{\alpha}$ at $\sigma$, which is simply the number of $\alpha$-tabloids fixed by $\sigma$. For example, $\xi_{(n-1,1)}(\sigma)$ is the number of $(n-1,1)$-tabloids fixed by $\sigma$, which is precisely the number of fixed points of $\sigma$. It will be be convenient for us to express certain irreducible characters in terms of permutation characters, via the determinantal formula.

We need one final representation-theoretic tool.

\begin{definition}
Let $\alpha$ be a partition of $n$. The {\em transpose} of $\alpha$ is the partition $\alpha^t$ whose Young diagram is obtained by transposing that of $\alpha$. In other words, if the Young diagram of $\alpha$ has $k$ columns of heights $c_1 \geq c_2 \geq \ldots \geq c_k$, then $\alpha^{t} = (c_1,\ldots,c_k)$.
\end{definition}
\begin{lemma}
\label{lemma:sign}
For any partition $\alpha \vdash n$, we have
$$\rho_{\alpha^{t}} \cong \rho_{\alpha} \otimes \sgn,$$
where $\sgn$ denotes the (1-dimensional) sign-representation of $S_n$, and $\otimes$ denotes tensor product of representations. Hence,
$$\chi_{\alpha^{t}}(\sigma) = \chi_{\alpha}(\sigma) \sgn(\sigma) \quad \forall \sigma \in S_n.$$
\end{lemma}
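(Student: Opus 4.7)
The character identity follows from the module isomorphism by taking traces, so it suffices to construct an explicit isomorphism $\rho_\alpha \otimes \sgn \cong \rho_{\alpha^t}$. I would do this via the Young symmetrizer realization of the Schur module. Fix an $\alpha$-tableau $T$, with row and column stabilizers $R_T, C_T \leq S_n$, and form the Young symmetrizer $c_T = a_T b_T$, where $a_T = \sum_{\pi \in R_T} \pi$ and $b_T = \sum_{\pi \in C_T} \sgn(\pi) \pi$. The left ideal $\mathbb{C}[S_n]\,c_T$, acted on by $S_n$ by left multiplication, is classically a realization of $\rho_\alpha$.

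The key device is the $\mathbb{C}$-linear map $\tau : \mathbb{C}[S_n] \to \mathbb{C}[S_n]$ defined on the group basis by $\tau(\pi) = \sgn(\pi)\pi$ and extended linearly. Since $\sgn$ is a homomorphism, $\tau$ is an algebra automorphism, and it obeys the twisting rule $\tau(\sigma x) = \sgn(\sigma)\,\sigma\,\tau(x)$ for every $\sigma \in S_n$ and $x \in \mathbb{C}[S_n]$. Let $T^t$ be the transpose of $T$, a tableau of shape $\alpha^t$; then $R_{T^t} = C_T$ and $C_{T^t} = R_T$, from which a direct calculation gives $\tau(a_T) = b_{T^t}$, $\tau(b_T) = a_{T^t}$, and hence $\tau(c_T) = b_{T^t} a_{T^t}$.

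A standard fact about Young symmetrizers says that $\mathbb{C}[S_n]\cdot b_{T^t} a_{T^t}$ is also a copy of $\rho_{\alpha^t}$, i.e.\ both orderings of the symmetrizer yield isomorphic irreducible left ideals. Thus $\tau$ restricts to a linear isomorphism $\mathbb{C}[S_n]\, c_T \to \mathbb{C}[S_n]\, \tau(c_T)$ between realizations of $\rho_\alpha$ and $\rho_{\alpha^t}$, and the twisting rule exactly says that under this isomorphism the action of $\sigma \in S_n$ on the source by $\rho_\alpha(\sigma)$ corresponds to the action $\sgn(\sigma)\,\rho_{\alpha^t}(\sigma)$ on the target. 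Rearranging gives $\rho_\alpha \otimes \sgn \cong \rho_{\alpha^t}$, and then taking traces yields the character identity.

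The single nontrivial ingredient---and the only place I expect real work---is the claim that both orderings $a_{T^t} b_{T^t}$ and $b_{T^t} a_{T^t}$ of the Young symmetrizer generate isomorphic left ideals. If pressed I would verify it directly: the map $\mathbb{C}[S_n]\, a_{T^t} b_{T^t} \to \mathbb{C}[S_n]\, b_{T^t} a_{T^t}$ given by $y \mapsto y\cdot a_{T^t}$ is $S_n$-equivariant, and it is nonzero because the quasi-idempotent relation $c_{T^t}^2 = \lambda_{\alpha^t}\, c_{T^t}$ forces $a_{T^t} b_{T^t} a_{T^t} \neq 0$; irreducibility together with Schur's lemma then makes it an isomorphism.
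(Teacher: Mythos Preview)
The paper does not prove this lemma at all; it is stated as a well-known fact from the representation theory of $S_n$ and used as a black box for character calculations. Your argument via the sign-twist automorphism $\tau$ of $\mathbb{C}[S_n]$ and the Young symmetrizer realization is correct and is essentially the standard textbook proof (cf.\ Fulton--Harris or James--Kerber). One small refinement: rather than invoking Schur's lemma for the ``reversed symmetrizer'' step, it is cleaner to note that right multiplication by $a_{T^t}$ and by $b_{T^t}$ give $S_n$-equivariant maps in opposite directions whose composite on $\mathbb{C}[S_n]\,c_{T^t}$ is the nonzero scalar $\lambda_{\alpha^t}$, so each is an isomorphism; this avoids having to separately verify irreducibility of $\mathbb{C}[S_n]\,b_{T^t}a_{T^t}$.
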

This will also be a useful tool for calculations.

\subsection*{Preliminary results}
In this section, we list some preliminary results which will be useful in our proofs. 

Theorem \ref{thm:normalcayley} implies that if \(\Gamma = \Cay(S_{n},X)\) is a normal Cayley graph on $S_n$, then the eigenvalues of its adjacency matrix are given by
\begin{equation}\label{eq:evals}\lambda_{\alpha} = \frac{1}{f^{\alpha}} \sum_{\sigma \in X} \chi_{\alpha}(\sigma) \quad (\alpha \vdash n).\end{equation}
Note that an eigenvalue \(\lambda\) has geometric multiplicity
\[\sum_{\alpha \vdash n:\ \lambda_{\alpha}=\lambda}(f^{\alpha})^{2}.\]

In our case of $\Gamma = \Gamma_n$, we can bound all but 8 of these eigenvalues by $O((n-3)!)$, using only the fact that their geometric multiplicities are large. We will do this using the following simple identity.
\begin{lemma}
\label{lemma:trace}
If \(G\) is an \(N\)-vertex graph whose adjacency matrix \(A\) has eigenvalues \(\lambda_{1} \geq \ldots \geq \lambda_{N}\) (repeated with their multiplicities), then
\[\sum_{i=1}^{N} \lambda_{i}^{2} = 2e(G).\]
\end{lemma}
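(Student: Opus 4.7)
The plan is to prove this via the standard trace identity for symmetric matrices. Since $G$ is a graph, its adjacency matrix $A$ is a real symmetric $\{0,1\}$-matrix with zero diagonal, so $A$ is diagonalizable by an orthogonal matrix, and the trace of any polynomial in $A$ equals the sum of that polynomial applied to its eigenvalues.

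First, I would observe that the trace of $A^2$ equals $\sum_{i=1}^N \lambda_i^2$. This is because $A = U^T D U$ for some orthogonal matrix $U$ and diagonal matrix $D$ with entries $\lambda_1,\ldots,\lambda_N$, so $A^2 = U^T D^2 U$ and $\mathrm{tr}(A^2) = \mathrm{tr}(D^2) = \sum_i \lambda_i^2$ by the cyclic property of the trace.

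Next, I would compute the trace of $A^2$ directly from the matrix entries. By definition of matrix multiplication,
\[
\mathrm{tr}(A^2) = \sum_{i=1}^N (A^2)_{ii} = \sum_{i=1}^N \sum_{j=1}^N A_{ij} A_{ji} = \sum_{i,j} A_{ij}^2,
\]
using the symmetry $A_{ji} = A_{ij}$. Since $A$ is a $\{0,1\}$-matrix, $A_{ij}^2 = A_{ij}$, so the last sum equals the number of ordered pairs $(i,j)$ with $\{i,j\}$ an edge of $G$, which is $2e(G)$ because each unordered edge contributes two ordered pairs.

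Combining the two expressions for $\mathrm{tr}(A^2)$ gives $\sum_{i=1}^N \lambda_i^2 = 2e(G)$, as required. There is no serious obstacle here; this is a one-line consequence of diagonalizability plus the combinatorial interpretation of $A_{ij}^2$, and the only thing to be slightly careful about is noting that loops (diagonal entries of $A$) vanish, which is automatic for a simple graph.
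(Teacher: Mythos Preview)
Your proof is correct and follows essentially the same approach as the paper: compute $\mathrm{tr}(A^2)$ in two ways, once via the eigenvalues and once via the entries, using $A_{ij}^2 = A_{ij}$ for a $\{0,1\}$-matrix. The paper's version is just a one-line chain of equalities, while yours spells out the diagonalization step in more detail, but the argument is identical.
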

\begin{proof}
We have 
$$ \sum_{i=1}^{N} \lambda_{i}^{2} = \textrm{Trace}(A^2) = \textrm{Trace}(A^{\top}A) = \sum_{i,j}A_{i,j}^2 = \sum_{i,j} A_{i,j} = 2e(G).$$
\end{proof}
Combined with the fact that $\lambda_{\alpha}$ has multiplicity $\geq (f^{\alpha})^2$, this immediately implies the following.
\begin{lemma}
\label{lemma:cayley}
If $\Gamma = \Cay(S_n,X)$ is a normal Cayley graph on $S_n$, then its eigenvalues satisfy
$$|\lambda_{\alpha}| \leq \frac{\sqrt{|X|n!}}{f^{\alpha}}\quad (\alpha \vdash n).$$
\end{lemma}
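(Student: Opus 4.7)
The plan is to combine Lemma \ref{lemma:trace} with the multiplicity statement implicit in Theorem \ref{thm:normalcayley}. First I would observe that since $X$ is (by hypothesis) inverse-closed, the Cayley graph $\Gamma = \Cay(S_n,X)$ is $|X|$-regular on $n!$ vertices, so $2e(\Gamma) = |X|\cdot n!$. By Lemma \ref{lemma:trace}, the eigenvalues $\mu_1,\ldots,\mu_{n!}$ of its adjacency matrix (listed with multiplicity) satisfy
\[
\sum_{i=1}^{n!} \mu_i^2 \;=\; |X|\cdot n!.
\]

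Next, by Theorem \ref{thm:normalcayley}, for each $\alpha \vdash n$ the subspace $U_{\rho_\alpha}$ lies inside the $\lambda_\alpha$-eigenspace of $A$. Since $\dim(U_{\rho_\alpha}) = (f^\alpha)^2$, the eigenvalue $\lambda_\alpha$ occurs among the $\mu_i$'s with multiplicity at least $(f^\alpha)^2$. Therefore
\[
(f^\alpha)^2 \lambda_\alpha^2 \;\leq\; \sum_{i=1}^{n!} \mu_i^2 \;=\; |X|\cdot n!,
\]
from which the claimed bound $|\lambda_\alpha| \leq \sqrt{|X|\,n!}/f^\alpha$ follows by taking square roots.

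There is essentially no obstacle here: the entire content of the lemma is the juxtaposition of two facts already on the table, namely the trace identity $\mathrm{Tr}(A^2) = 2e(\Gamma)$ and the dimension count $\dim(U_{\rho_\alpha}) = (f^\alpha)^2$. The only thing to be careful about is ensuring that $U_{\rho_\alpha}$ actually sits inside a single eigenspace (rather than being smeared across several), which is precisely the content of Theorem \ref{thm:normalcayley}. Once that is invoked, the rest is a one-line estimate.
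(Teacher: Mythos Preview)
Your proof is correct and is essentially identical to the paper's own argument: the paper states that Lemma~\ref{lemma:cayley} follows immediately from Lemma~\ref{lemma:trace} combined with the fact that $\lambda_\alpha$ has multiplicity at least $(f^\alpha)^2$, which is exactly what you wrote out in detail.
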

We now recall a bound on the dimensions $f^{\alpha}$ from \cite{jointpaper}.
\begin{definition}
We say that a partition of \(n\) is \(k\)-{\em fat} if its Young diagram has first row of length at least \(n-k\), \(k\)-{\em tall} if its Young diagram has first column of height least \(n-k\), and \(k\)-{\em medium} otherwise.
\end{definition}
\begin{lemma}(See \cite{jointpaper}.)
For any \(k \in \mathbb{N}\), there exists a positive constant \(c_{k}\) such that for any \(n \in \mathbb{N}\), and any \(k\)-medium partition \(\alpha \vdash n\), we have
\[f^{\alpha} \geq c_{k} n^{k+1}.\]
\end{lemma}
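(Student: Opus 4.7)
The plan is to apply the hook-length formula and carry out a case analysis on $m := n - \alpha_1$. First, by Lemma \ref{lemma:sign}, $f^{\alpha^t} = f^\alpha$, and the $k$-medium condition is symmetric under transposition, so I may assume $\alpha_1 \geq \alpha_1^t$. Since the Young diagram of $\alpha$ fits inside an $\alpha_1 \times \alpha_1^t$ rectangle and contains $n$ cells, this forces $\alpha_1 \geq \sqrt{n}$; combined with the $k$-medium hypothesis $\alpha_1 \leq n - k - 1$, we get $m \in [k+1,\, n - \sqrt{n}]$.

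Next, writing $\alpha = (\alpha_1, \mu)$ with $\mu = (\alpha_2, \alpha_3, \ldots) \vdash m$, I would compute the hook product by separating the first row from the rest: the hook lengths of $\alpha$ in rows $\geq 2$ coincide with those of $\mu$ (contributing $m!/f^\mu$ to the product), while the first-row hook lengths factor as $(\alpha_1 - \mu_1)! \prod_{j=1}^{\mu_1}(\alpha_1 - j + 1 + \mu_j^t)$, where $\mu_j^t$ denotes the $j$-th column height of $\mu$. Substituting into the hook-length formula gives the key identity
$$f^\alpha \;=\; \binom{n}{m}\, f^\mu \prod_{j=1}^{\mu_1}\frac{\alpha_1 - j + 1}{\alpha_1 - j + 1 + \mu_j^t}.$$
For $m = k+1$, the partition $\mu$ ranges over a finite set, $f^\mu \geq 1$, and for $n$ large each factor of the product exceeds $1/2$, so $f^\alpha \geq 2^{-(k+1)}\binom{n}{k+1} = \Omega(n^{k+1})$. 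For $m \geq k+2$, noting that $\binom{n}{m}$ is minimized on $[k+2,\, n-k-2]$ at the endpoints and that $n - \sqrt{n} \leq n - k - 2$ for $n$ large, one has $\binom{n}{m} \geq \binom{n}{k+2} = \Omega(n^{k+2})$; it then suffices to show $f^\mu \cdot R \geq c/n$, where $R$ denotes the product.

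The hard part is establishing this last inequality uniformly in $\mu$. Using $\mu_j^t \leq l(\mu)$, a rearrangement shows $R \geq \prod_{i=1}^{l(\mu)}(\alpha_1 - \mu_1 + i)/(\alpha_1 + i)$, but this can be very small when $\mu$ is close to a rectangle; in that regime, however, $f^\mu$ is exponentially large in the shorter dimension of the rectangle and easily compensates. At the opposite extreme, when $\mu$ is close to a single row $(m)$ with $m \approx n/2$, we have $R \approx (n - 2m)/(n - m) = \Theta(1/n)$ while $f^\mu \geq 1$, giving $f^\mu R = \Omega(1/n)$ directly, and $\binom{n}{m}$ is in any case exponentially large. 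Interpolating between these regimes, and verifying $f^\mu R = \Omega(1/n)$ for the intermediate shapes (where both $f^\mu$ and $R$ are delicate), is the technical heart of the argument and the step most likely to require the detailed hook-length estimates already worked out in \cite{jointpaper}.
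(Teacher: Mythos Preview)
The paper does not prove this lemma; it is stated with a bare citation to \cite{jointpaper}, so there is no in-paper argument to compare your proposal against.

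On its own merits: your transposition reduction is valid, the hook-length identity
\[
f^\alpha \;=\; \binom{n}{m}\, f^\mu \prod_{j=1}^{\mu_1}\frac{\alpha_1 - j + 1}{\alpha_1 - j + 1 + \mu_j^t}
\]
is correct, and the boundary case $m = k+1$ is handled properly. But the argument is, by your own admission, incomplete. For $m \geq k+2$ you reduce to the claim $f^\mu \cdot R = \Omega(1/n)$ uniformly over all $\mu \vdash m$ and all admissible $m$, verify it at two extreme shapes (a single row, and a near-rectangle), and then state that the interpolation between these extremes ``is the technical heart of the argument'' and ``most likely'' requires the estimates from the cited reference. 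That is a gap, not a proof: in the intermediate regime where $m$ is moderately large and $\mu$ is neither a single row nor close to a rectangle, both $R$ and $f^\mu$ vary non-trivially, and nothing in your sketch pins down their product.

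If you want to finish along these lines without simply appealing to \cite{jointpaper}, one workable organisation is: when $\mu_1 \leq \alpha_1/2$, bound $\log(1/R) \leq \sum_{j} \mu_j^t/(\alpha_1 - j + 1) \leq 2m/\alpha_1$ and check that $\binom{n}{m}\,e^{-2m/(n-m)}$ is at least polynomial in $n$ throughout $m \in [k+2,\,n-\sqrt{n}]$; when $\mu_1 > \alpha_1/2$, observe that this forces $m > n/3$, so $\binom{n}{m}$ is exponentially large and a crude polynomial lower bound on $R$ suffices. Each of these steps still needs to be carried out carefully, and as written you have not done so.
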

In particular, any 2-medium $\alpha$ has $f^{\alpha} \geq c_2 n^3$. Combined with Lemma \ref{lemma:cayley}, and the fact that $|\mathcal{E}_n| = (1/e+o(1))n!$, this yields a bound on all the eigenvalues corresponding to 2-medium partitions.
\begin{corollary}
\label{corr:2medium}
If $\alpha$ is a 2-medium partition of $n$, then
$$|\lambda_{\alpha}| \leq \frac{\sqrt{1/e+o(1)}n!}{c_2n^3} \leq C (n-3)!,$$
where $C$ is an absolute constant.
\end{corollary}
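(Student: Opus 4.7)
The plan is simply to chain together the three ingredients stated immediately before the corollary. The graph $\Gamma_n$ is the normal Cayley graph $\Cay(S_n,\mathcal{E}_n)$, since $\mathcal{E}_n$ is a union of conjugacy classes (permutations with exactly one fixed point) and is inverse-closed. Hence Lemma \ref{lemma:cayley} applies with $X = \mathcal{E}_n$ and gives
\[|\lambda_\alpha| \;\leq\; \frac{\sqrt{|\mathcal{E}_n|\,n!}}{f^\alpha} \quad (\alpha \vdash n).\]
I would then substitute the already-noted asymptotic $|\mathcal{E}_n| = n\,d_{n-1} = (1/e + o(1))\,n!$ into the numerator, which yields $\sqrt{|\mathcal{E}_n|\,n!} \leq \sqrt{1/e + o(1)}\cdot n!$.

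For the denominator, the hypothesis that $\alpha$ is 2-medium lets me apply the dimension bound from \cite{jointpaper} (stated just above the corollary) with $k = 2$, giving $f^\alpha \geq c_2 n^3$. Combining these two bounds produces the first of the two displayed inequalities. For the second, I would just estimate $n!/n^3$ crudely: writing $n! = n(n-1)(n-2)(n-3)!$, one gets
\[\frac{n!}{n^3} \;=\; \frac{n(n-1)(n-2)}{n^3}\,(n-3)! \;\leq\; (n-3)!,\]
so the whole expression is bounded by $(\sqrt{1/e + o(1)}/c_2)(n-3)!$, and taking $C$ to be any absolute constant exceeding $\sqrt{1/e}/c_2$ (valid for $n$ large) finishes the argument.

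There is no real obstacle here; the corollary is a routine bookkeeping step whose substance lies entirely in the earlier Lemma \ref{lemma:cayley} (which uses the multiplicity $\geq (f^\alpha)^2$ via the trace identity in Lemma \ref{lemma:trace}) and in the dimension bound for $k$-medium partitions imported from \cite{jointpaper}. The one small care to take is that the $o(1)$ term is absorbed into the absolute constant $C$ only once $n$ is large enough, which is consistent with the standing assumption of the paper that $n$ is sufficiently large.
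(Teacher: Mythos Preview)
Your proposal is correct and follows exactly the approach of the paper: the corollary is stated there as an immediate combination of Lemma \ref{lemma:cayley}, the dimension bound $f^{\alpha} \geq c_2 n^3$ for 2-medium partitions, and the asymptotic $|\mathcal{E}_n| = (1/e+o(1))n!$. Your write-up simply fleshes out these steps explicitly, including the estimate $n!/n^3 \leq (n-3)!$ for the final inequality.
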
 

\section{An asymptotic result}
We are now ready to prove our asymptotic version of Theorem \ref{thm:main}.
\begin{proposition}
\label{prop:asymptoticbound}
Let \(\mathcal{A} \subset S_{n}\) be such that no two permutations in \(\mathcal{A}\) agree on exactly one point. Then
\[|\mathcal{A}| \leq (1+O(1/n))(n-2)!.\]
\end{proposition}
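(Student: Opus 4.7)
The plan is to apply Hoffman's bound (Theorem \ref{thm:hoffman}) to $\Gamma_n$, whose eigenvalues $\lambda_\alpha = \tfrac{1}{f^\alpha}\sum_{\sigma \in \mathcal{E}_n}\chi_\alpha(\sigma)$ are indexed by partitions $\alpha \vdash n$ via Theorem \ref{thm:normalcayley}. By Corollary \ref{corr:2medium}, $|\lambda_\alpha| = O((n-3)!)$ whenever $\alpha$ is 2-medium, which is negligible relative to the target eigenvalue $\sim (n-2)!/e$. Hence only the eight eigenvalues indexed by the four 2-fat partitions $(n), (n-1,1), (n-2,2), (n-2,1,1)$ and their four 2-tall transposes need to be pinned down precisely.

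For the 2-fat partitions, I would expand each irreducible character through the determinantal formula \eqref{eq:determinantalformula} in terms of permutation characters $\xi_\beta$, using the standard identities $\xi_{(n-1,1)}(\sigma) = \#\mathrm{Fix}(\sigma)$, $\xi_{(n-2,2)}(\sigma) = \binom{\#\mathrm{Fix}(\sigma)}{2} + t_2(\sigma)$ and $\xi_{(n-2,1,1)}(\sigma) = \#\mathrm{Fix}(\sigma)(\#\mathrm{Fix}(\sigma)-1)$, where $t_2(\sigma)$ denotes the number of $2$-cycles of $\sigma$. Specialising to $\sigma \in \mathcal{E}_n$ (so that $\#\mathrm{Fix}(\sigma) = 1$) gives $\chi_{(n)}(\sigma) = 1$, $\chi_{(n-1,1)}(\sigma) = 0$, $\chi_{(n-2,2)}(\sigma) = t_2(\sigma)-1$ and $\chi_{(n-2,1,1)}(\sigma) = -t_2(\sigma)$. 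A one-line double count, organised by which unordered pair $\{i,j\}$ is a 2-cycle of $\sigma$, gives $\sum_{\sigma \in \mathcal{E}_n}t_2(\sigma) = \binom{n}{2}(n-2)d_{n-3}$. Combining this with the hook-length dimensions $f^{(n-2,2)} = n(n-3)/2$ and $f^{(n-2,1,1)} = (n-1)(n-2)/2$ and the asymptotic $d_m = m!/e + O(1)$, one finds $\lambda_{(n)} = nd_{n-1}$, $\lambda_{(n-1,1)} = 0$ and $\lambda_{(n-2,2)} = \lambda_{(n-2,1,1)} = -(1+O(1/n))(n-2)!/e$.

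For the 2-tall partitions, Lemma \ref{lemma:sign} gives $\chi_{\alpha^t}(\sigma) = \chi_\alpha(\sigma)\sgn(\sigma)$. Every $\sigma \in \mathcal{E}_n$ restricts to a derangement on its $n-1$ non-fixed coordinates, and a short inclusion-exclusion (or direct induction on $\sum_{\sigma \in S_m}\sgn(\sigma) = 0$) produces the classical identity $\sum_{\tau \in D_m}\sgn(\tau) = (-1)^{m-1}(m-1)$. Re-running the above double count in signed form then gives $\sum_{\sigma \in \mathcal{E}_n}\sgn(\sigma) = O(n^2)$ and $\sum_{\sigma \in \mathcal{E}_n}t_2(\sigma)\sgn(\sigma) = O(n^4)$, so after dividing by the dimensions every 2-tall eigenvalue is at most $O(n^3)$ in absolute value, hence entirely negligible compared with $(n-2)!/e$.

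Putting these pieces together, $\lambda_{\min}(\Gamma_n) = -(1+O(1/n))(n-2)!/e$, while the degree is $d = |\mathcal{E}_n| = nd_{n-1} = (1+O(1/n))n!/e$, so Hoffman's bound delivers
\[|\mathcal{A}| \leq \frac{-\lambda_{\min}}{d - \lambda_{\min}}\,n! = (1+O(1/n))(n-2)!,\]
as required. The main obstacle I anticipate is threefold: carrying out the determinantal-formula expansions with enough precision to retain the $O(1/n)$ error term in $\lambda_{(n-2,2)}$ and $\lambda_{(n-2,1,1)}$; verifying the signed-derangement identity cleanly; and confirming that neither 2-tall eigenvalue sneaks in below the two 2-fat candidates. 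All three reduce to honest but finite bookkeeping using only the tools already recorded in the excerpt.
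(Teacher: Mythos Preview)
Your proposal is correct and follows essentially the same approach as the paper: apply Hoffman's bound to $\Gamma_n$, dispose of the 2-medium eigenvalues via Corollary~\ref{corr:2medium}, compute the four 2-fat eigenvalues through the determinantal formula, handle the four 2-tall eigenvalues via Lemma~\ref{lemma:sign} and signed derangement identities, and conclude that $\lambda_{\min} = -(1+O(1/n))(n-2)!/e$. The only cosmetic difference is that you evaluate the characters $\chi_{(n-2,2)}$ and $\chi_{(n-2,1,1)}$ on $\mathcal{E}_n$ directly in terms of the 2-cycle count $t_2(\sigma)$, whereas the paper carries the double sum $\sum_{ij}\sum_{\sigma}\mathbf{1}\{\sigma\{i,j\}=\{i,j\}\}$ to an exact closed form before extracting asymptotics; both routes produce the same eigenvalue estimates.
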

\begin{proof}
To prove this, we will simply calculate the minimum eigenvalue of $\Gamma_n$ and apply Theorem \ref{thm:hoffman} (Hoffman's bound). By Corollary \ref{corr:2medium}, all the 2-medium partitions $\alpha$ of $n$ have
\begin{equation} \label{eq:uniformbound} |\lambda_{\alpha}| \leq C(n-3)!.\end{equation}

There are 8 other partitions of $n$. Using equation (\ref{eq:evals}), combined with the determinantal formula (\ref{eq:determinantalformula}), one finds that these have eigenvalues as follows.\\

\begin{tabular}{l|l}
\(\alpha\) & \(\lambda_{\alpha}\)\\
\hline
\((n)\) & \(nd_{n-1}\)\\
\((1^{n})\) & \((-1)^{n-2}n(n-2)\)\\
\((n-1,1)\) & \(0\)\\
\((2,1^{n-2})\) & \(0\)\\
\((n-2,2)\) & \(-\frac{nd_{n-1}}{(n-1)(n-2)-2} (1+(-1)^{n}(n-2)/d_{n-1})\)\\
\((2,2,1^{n-4})\) & \((-1)^{n-1}(n-2)^{2}\)\\
\((n-2,1,1)\) & \(-\frac{nd_{n-1}}{(n-1)(n-2)}(1-(-1)^{n}(n-2)/d_{n-1})\)\\
\((3,1^{n-3})\) & \((-1)^{n}n(n-4)\)\\
\end{tabular}\\
\\

Note that, since $\chi_{(n)} \equiv 1$ is the trivial character, by (\ref{eq:evals}) we have
$$\lambda_{(n)} = |\mathcal{E}_n| = nd_{n-1}.$$

In the case $\alpha = (n-2,2)$, the determinantal formula yields
$$\chi_{(n-2,2)} = \xi_{(n-2,2)} - \xi_{(n-1,1)}.$$
Note that $\xi_{(n-2,2)}(\sigma) = \#\{x \in [n]^{(2)}:\ \sigma(x)=x\}$ is simply the number of pairs which are fixed by $\sigma$, and $\xi_{(n-1,1)}(\sigma)$ is the number of fixed points of $\sigma$. We have
$$f^{(n-2,2)} = \chi_{(n-2,2)}(\textrm{Id}) = \xi_{(n-2,2)}(\textrm{Id}) - \xi_{(n-1,1)}(\textrm{Id}) = {n \choose 2} - n.$$
Therefore, by (\ref{eq:evals}), we have
\begin{align*}
\lambda_{(n-2,2)} & = \frac{1}{{n \choose 2}-n} \sum_{\sigma \in \mathcal{E}_n} (\xi_{(n-2,2)}(\sigma)-\xi_{(n-1,1)}(\sigma))\\
& = \frac{1}{{n \choose 2}-n} \left(\sum_{ij \in [n]^{(2)}} \sum_{\sigma \in \mathcal{E}_n} 1\{\sigma\{i,j\} = \{i,j\}\} - nd_{n-1}\right)\\
& = \frac{1}{{n \choose 2}-n} \left({n \choose 2} (n-2)d_{n-3} - nd_{n-1}\right)\\
& = \frac{1}{{n \choose 2}-n} \left({n \choose 2} (n-2) \frac{d_{n-1}+ (-1)^{n-1}(n-2)}{(n-1)(n-2)} -nd_{n-1}\right)\\
& = -\frac{n}{2\left({n \choose 2}-n\right)} (d_{n-1}+(-1)^{n}(n-2))\\
& = -\frac{nd_{n-1}}{2\left({n\choose 2}-n\right)}(1+(-1)^{n}(n-2)/d_{n-1})\\
& = -\frac{nd_{n-1}}{(n-1)(n-2)-2} (1+(-1)^{n}(n-2)/d_{n-1}).
\end{align*}

The calculations of $\lambda_{(n-1,1)}$ and $\lambda_{(n-2,1^2)}$ are very similar.

For $\alpha = (1^n)$, we have $\chi_{(1^n)} = \sgn$, so by (\ref{eq:evals}), we have
$$\lambda_{(1^n)} = \sum_{\sigma \in \mathcal{E}_n} \sgn(\sigma) = n(e_{n-1} - o_{n-1}),$$
where $e_n$ and $o_n$ denote the number of respectively even and odd derangements in $S_n$. It is well-known (and can easily be proved by induction) that $e_n - o_n = (-1)^{n-1}(n-1)$ for all $n$. This yields
\[\lambda_{(1^{n})} = n(e_{n-1}-o_{n-1}) = (-1)^{n-2}n(n-2).\]

For the calculations of $\lambda_{(2,1^{n-2})}$, $\lambda_{(2,2,1^{n-4})}$ and $\lambda_{(3,1^{n-3})}$, we use Lemma \ref{lemma:sign}, combined with (\ref{eq:evals}) and the determinantal formula. For example,

 \begin{align*}
\lambda_{(2,2,1^{n-4})} & = \frac{1}{{n \choose 2}-n} \sum_{\sigma \in \mathcal{E}_n} \sgn(\sigma)(\xi_{(n-2,2)}(\sigma)-\xi_{(n-1,1)}(\sigma))\\
& = \frac{1}{{n \choose 2}-n} \left(\sum_{ij \in [n]^{(2)}} \sum_{\sigma \in \mathcal{E}_n} \sgn(\sigma)1\{\sigma\{i,j\} = \{i,j\}\} - n(e_{n-1}-o_{n-1})\right)\\
& = \frac{1}{{n \choose 2}-n} \left({n \choose 2} (n-2)(o_{n-3}-e_{n-3}) - n(n-2)(-1)^{n-2}\right)\\
& = \frac{1}{{n \choose 2}-n} \left({n \choose 2} (n-2) (-1)^{n-3}(n-4)+ n(n-2)(-1)^{n-1}\right)\\
& = -\frac{(-1)^{n-1}}{2\left({n \choose 2}-n\right)} (n(n-1)(n-2)(n-4)+2n(n-2))\\
& = -\frac{(-1)^{n-1}n(n-2)}{(n-1)(n-2)-2} ((n-1)(n-4)+2)\\
& = -\frac{(-1)^{n-1}n(n-2)}{n(n-3)} (n-2)(n-3)\\
& =  (-1)^{n-1}(n-2)^{2}.
\end{align*}

We see from equation (\ref{eq:uniformbound}) and the table following it that $\Gamma_n$ has
\[\lambda_N = -(1+O(1/n))\frac{nd_{n-1}}{n^{2}}.\]
Applying Theorem \ref{thm:hoffman} (Hoffman's bound) with \(d = nd_{n-1}\), \(N=n!\), and the above bound on $\lambda_N$, we see that any independent set $\mathcal{A} \subset V(\Gamma_n)$ has
\[|\mathcal{A}| \leq (1+O(1/n))(n-2)!,\]
proving Proposition \ref{prop:asymptoticbound}.
\end{proof}

From the above proof, we can also read off a two-family version of Proposition \ref{prop:asymptoticbound}. Recall the following `cross-independent' version of Hoffman's theorem.
\begin{theorem}
\label{thm:crosshoffman}
Let \(\Gamma\) be a \(d\)-regular graph on \(N\) vertices, whose adjacency matrix \(A\) has eigenvalues \(\lambda_{1}=d \geq \lambda_{2} \geq \ldots \geq \lambda_{N}\). Let \(\nu = \max(|\lambda_{2}|,|\lambda_{N}|)\). Then if \(X,Y \subset V(\Gamma)\) with \(xy \notin E(\Gamma)\) for any $x \in X$ and any $y \in Y$, we have:
\[\sqrt{|X||Y|} \leq \frac{\nu}{d+\nu}N.\]
\end{theorem}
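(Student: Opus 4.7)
The plan is to run the standard spectral argument used to prove Hoffman's bound, but applied to both indicator vectors $\mathbf{1}_{X}$ and $\mathbf{1}_{Y}$ simultaneously. First I would choose an orthonormal eigenbasis $v_1,\ldots,v_N$ of $A$ with $v_1 = \mathbf{1}/\sqrt{N}$ (the eigenvector for $\lambda_1 = d$, using regularity), and decompose
\[
\mathbf{1}_X = \tfrac{|X|}{N}\mathbf{1} + f, \qquad \mathbf{1}_Y = \tfrac{|Y|}{N}\mathbf{1} + g,
\]
where $f,g \perp \mathbf{1}$. By Parseval, $\|f\|_2^2 = |X|(1-|X|/N)$ and $\|g\|_2^2 = |Y|(1-|Y|/N)$.

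Next I would use the cross-independence hypothesis: since no edge of $\Gamma$ joins $X$ to $Y$, we have $\mathbf{1}_X^{\top} A \mathbf{1}_Y = 0$. Expanding with the decomposition above, the cross-terms involving $\mathbf{1}$ vanish (because $A\mathbf{1}=d\mathbf{1}$ and $f,g \perp \mathbf{1}$), leaving
\[
0 \;=\; \tfrac{|X|\,|Y|}{N^{2}}\cdot d\,N \;+\; f^{\top} A\, g,
\]
so $d|X||Y|/N = -f^{\top} A g$. Since $f$ and $g$ both lie in the span of $v_2,\ldots,v_N$, and the eigenvalues of $A$ restricted to that subspace all have absolute value at most $\nu$, Cauchy--Schwarz applied eigenspace-by-eigenspace gives $|f^{\top} A g| \le \nu \|f\|_2 \|g\|_2$. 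Writing $x = |X|/N$, $y = |Y|/N$, this yields
\[
d\sqrt{xy} \;\le\; \nu\sqrt{(1-x)(1-y)}.
\]

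The final (and mildly sneaky) step is to convert this into the stated bound $\sqrt{xy} \le \nu/(d+\nu)$. The naive estimate $\sqrt{(1-x)(1-y)} \le 1$ only yields the weaker $\sqrt{xy}\le\nu/d$, so instead I would use AM--GM in the form $x+y\ge 2\sqrt{xy}$, giving
\[
(1-x)(1-y) \;=\; 1 - (x+y) + xy \;\le\; 1 - 2\sqrt{xy} + xy \;=\; (1-\sqrt{xy})^{2}.
\]
Substituting, $d\sqrt{xy} \le \nu(1-\sqrt{xy})$, hence $(d+\nu)\sqrt{xy}\le\nu$, i.e.\ $\sqrt{|X|\,|Y|}\le \tfrac{\nu}{d+\nu}N$, as required. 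The only step that requires any thought beyond the one-family case is this last algebraic manipulation; everything else is a direct two-vector analogue of the standard proof of Theorem \ref{thm:hoffman}.
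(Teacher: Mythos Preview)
Your proof is correct. The paper does not actually prove this statement; it simply cites \cite{jointpaper} for a proof. Your argument is the standard one, and indeed is essentially the proof given in that reference: expand $\mathbf{1}_X^\top A\,\mathbf{1}_Y=0$ in an eigenbasis, bound the off-constant part by $\nu\|f\|_2\|g\|_2$, and then use the AM--GM step $(1-x)(1-y)\le(1-\sqrt{xy})^2$ to pass from $d\sqrt{xy}\le\nu\sqrt{(1-x)(1-y)}$ to the claimed bound.
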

(For a proof, see for example \cite{jointpaper}.) By equation (\ref{eq:uniformbound}) and the table following it, our graph $\Gamma_n$ has $\nu = |\lambda_N|$ for $n$ sufficiently large. Hence, Theorem \ref{thm:crosshoffman} immediately gives the following.

\begin{proposition}
\label{prop:cross}
If \(\mathcal{A},\mathcal{B} \subset S_n\) are families of permutations such that no permutation in \(\mathcal{A}\) agrees with any permutation in \(\mathcal{B}\) at exactly one point, then
\[|\mathcal{A}||\mathcal{B}| \leq (1+O(1/n))((n-2)!)^{2}.\]
\end{proposition}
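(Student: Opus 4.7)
The plan is to apply the cross-independent Hoffman bound (Theorem \ref{thm:crosshoffman}) directly to the graph $\Gamma_n$, recycling the eigenvalue analysis already carried out in the proof of Proposition \ref{prop:asymptoticbound}. The hypothesis that no permutation in $\mathcal{A}$ agrees with any permutation in $\mathcal{B}$ on exactly one point is precisely the statement that $\mathcal{A}$ and $\mathcal{B}$ form a cross-independent pair in $\Gamma_n$, so Theorem \ref{thm:crosshoffman} applies as soon as we have identified $d$ and $\nu = \max(|\lambda_2|,|\lambda_N|)$.

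To pin down $\nu$, I would assemble the information already proved. The eigenvalue table from the proof of Proposition \ref{prop:asymptoticbound} lists the eight eigenvalues indexed by $2$-fat and $2$-tall partitions; of these, only $\lambda_{(n)} = nd_{n-1}$, $\lambda_{(n-2,2)}$ and $\lambda_{(n-2,1,1)}$ have absolute value larger than $O(n^2)$. Meanwhile, by Corollary \ref{corr:2medium}, every eigenvalue indexed by a $2$-medium partition satisfies $|\lambda_\alpha| \leq C(n-3)!$. The two non-trivial eigenvalues $\lambda_{(n-2,2)}$ and $\lambda_{(n-2,1,1)}$ are both negative and, as computed in the proof of Proposition \ref{prop:asymptoticbound}, of magnitude $(1+O(1/n))\,nd_{n-1}/n^2 = \Theta((n-2)!)$, which dominates $\max(Cn^2, C(n-3)!)$ for all sufficiently large $n$. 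Therefore for large $n$ we have $\nu = |\lambda_N| = (1+O(1/n))\,nd_{n-1}/n^2$.

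Plugging $d = nd_{n-1}$, $N = n!$ and this value of $\nu$ into Theorem \ref{thm:crosshoffman} then yields
\[
\sqrt{|\mathcal{A}||\mathcal{B}|} \;\leq\; \frac{\nu}{d+\nu}\,N \;=\; \frac{(1+O(1/n))/n^{2}}{1+O(1/n^{2})}\cdot n! \;=\; (1+O(1/n))\,(n-2)!,
\]
and squaring gives the claim. There is no substantive obstacle here beyond bookkeeping: the spectrum of $\Gamma_n$ was fully tabulated for Proposition \ref{prop:asymptoticbound}, and the only new verification is the (trivial) comparison showing that the smallest eigenvalue dominates the largest non-principal positive eigenvalue, so that the relevant quantity $\nu$ in Theorem \ref{thm:crosshoffman} coincides with $|\lambda_N|$.
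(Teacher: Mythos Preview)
Your proposal is correct and follows essentially the same approach as the paper: the paper simply observes that by the eigenvalue table and the bound (\ref{eq:uniformbound}), $\nu = |\lambda_N|$ for $n$ sufficiently large, and then invokes Theorem \ref{thm:crosshoffman} directly. Your write-up spells out the comparison of eigenvalues in slightly more detail, but the argument is the same.
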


This proposition will be used in the proof of our stability result in the next section.

\section{A stability result}
Our main aim in this section is to prove the following stability result, which says that a `large' family of permutation with no singleton intersection, is `almost' contained within a 2-coset.
\begin{theorem}
\label{thm:roughstability}
Let \(c >0\). Let \(\mathcal{A} \subset S_{n}\) with no singleton intersection, and with \(|\mathcal{A}| \geq c(n-2)!\). Then there exist \(i,j,k,l \in [n]\) such that
\[|\mathcal{A} \setminus \mathcal{A}_{i \mapsto j,k \mapsto l}| \leq K_{c}(n-3)!,\]
where $K_c >0$ depends only upon $c$.
\end{theorem}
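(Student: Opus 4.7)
The plan is to combine the detailed spectral data from the proof of Proposition \ref{prop:asymptoticbound} with the 1-intersecting stability result of \cite{cameronkuconj}. The pivotal observation is that for any $i,j \in [n]$, the slice $\mathcal{A}_{i \mapsto j} := \{\sigma \in \mathcal{A} : \sigma(i) = j\}$, regarded as a family of bijections from $[n]\setminus\{i\}$ to $[n]\setminus\{j\}$, is automatically 1-intersecting: if $\sigma, \tau \in \mathcal{A}$ both send $i$ to $j$, then they cannot agree at exactly one point, and so must agree on at least one point of $[n]\setminus\{i\}$. Hence the 1-intersecting stability result of \cite{cameronkuconj} can be applied, inside $S_{n-1}$, to any sufficiently large slice.

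First I would refine the spectral step of Proposition \ref{prop:asymptoticbound}. Decomposing $\mathbf{1}_\mathcal{A} = \sum_\alpha f_\alpha$ into isotypic components and applying the Hoffman identity $\sum_\alpha \lambda_\alpha \|f_\alpha\|_2^2 = 0$ in counting measure, the eigenvalue table and Corollary \ref{corr:2medium} show that all components outside $\{(n), (n-2,2), (n-2,1,1)\}$ contribute eigenvalues of magnitude $O((n-3)!)$, while $|\lambda_{(n-2,2)}|$ and $|\lambda_{(n-2,1,1)}|$ share a common asymptotic value $\lambda_* \sim (n-2)!/e$. Rearranging yields the quantitative bound $\|f_{(n-2,2)}\|_2^2 + \|f_{(n-2,1,1)}\|_2^2 \geq (c^2 - o(1))(n-2)!$ under the hypothesis $|\mathcal{A}| \geq c(n-2)!$. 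Since $U_{(n)} \oplus U_{(n-1,1)} \oplus U_{(n-2,2)} \oplus U_{(n-2,1,1)}$ coincides with the span in $\mathbb{C}[S_n]$ of the 2-coset indicators $\mathbf{1}_{\mathcal{A}_{i \mapsto j,\, k \mapsto l}}$, this says that $\mathbf{1}_\mathcal{A}$ has substantial $L^2$-projection onto the 2-coset span. A variance computation on the 2-coordinate projections $\Pi_{ik} \mathbf{1}_\mathcal{A}(\sigma) = a_{i,\sigma(i),k,\sigma(k)}/(n-2)!$, where $a_{ijkl} := |\mathcal{A}_{i \mapsto j,\,k \mapsto l}|$, can then be used to extract a pair $(i,k)$ and values $(j,l)$ for which $|\mathcal{A}_{i \mapsto j}|$ is at least a constant fraction $c'(c) > 0$ of $|\mathcal{A}|$.

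Given such a dominant slice, applying the 1-intersecting stability of \cite{cameronkuconj} inside $S_{n-1}$ to $\mathcal{A}_{i \mapsto j}$ produces $(k,l)$ with $k \neq i$, $l \neq j$, satisfying $|\mathcal{A}_{i \mapsto j} \setminus \mathcal{A}_{i \mapsto j,\, k \mapsto l}| \leq K_1(n-3)!$. It then remains to bound $|\mathcal{A} \setminus \mathcal{A}_{i \mapsto j}|$ by $O_c((n-3)!)$, and this is the main obstacle: a direct application of Proposition \ref{prop:cross} to the pair $(\mathcal{A}_{i \mapsto j,\, k \mapsto l},\, \mathcal{A} \setminus \mathcal{A}_{i \mapsto j})$ only yields $O_c((n-2)!)$, losing a factor of $n$. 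Closing this gap requires a bootstrap step in which the spectral/variance analysis is re-applied to the leftover family (which inherits the no-singleton-intersection property), exploiting that any large leftover would itself carry nontrivial 2-coset structure incompatible with cross-independence from the already-identified dominant 2-coset. Carrying out this bootstrap cleanly, uniformly in $c$, is the most delicate part of the argument, since the raw spectral information degrades as $c$ shrinks and the error terms must be controlled explicitly across the iteration.
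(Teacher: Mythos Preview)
Your overall architecture matches the paper's: spectral information from Proposition~\ref{prop:asymptoticbound} locates a heavy $2$-coset, and the $1$-intersecting stability theorem of \cite{cameronkuconj} is applied to a slice $\mathcal{A}_{i\mapsto j}$ (which is indeed $1$-intersecting in $S_{n-1}$). You have also correctly isolated the crux: showing $|\mathcal{A}\setminus\mathcal{A}_{i\mapsto j}| = O_c((n-3)!)$, where a naive use of Proposition~\ref{prop:cross} loses a factor of $n$.

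The gap is your bootstrap. Iterating the spectral/variance analysis on the leftover $\mathcal{B}=\mathcal{A}\setminus\mathcal{A}_{i\mapsto j}$ only kicks in when $|\mathcal{B}|\geq c''(n-2)!$, and even then, finding a second heavy $2$-coset inside $\mathcal{B}$ gives no contradiction: two disjoint subfamilies of $\mathcal{A}$, each of size $\Theta((n-2)!)$, are perfectly compatible with the cross bound $|\mathcal{A}'||\mathcal{B}'|\leq(1+o(1))((n-2)!)^2$ of Proposition~\ref{prop:cross}. Nothing in this picture forces $|\mathcal{B}|$ below $\Theta((n-2)!)$, let alone down to $O_c((n-3)!)$. (There is also a smaller issue earlier: the Hoffman identity constrains the $(n-2,2)$ and $(n-2,1^2)$ components but says nothing about the $(n-1,1)$ component, since $\lambda_{(n-1,1)}=0$; so one cannot read off a $1$-coset slice of size $c'(c)\,|\mathcal{A}|$ directly. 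The paper, via Lemma~7 of \cite{dezafranklstability}, only obtains a $2$-coset slice of size $f(n)c(n-4)!$ with $f(n)=\Theta(\sqrt{n/\log n})$, and bootstraps \emph{from there}.)

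What actually closes the gap in the paper is not iteration but a \emph{translate-and-restrict} trick, formalized in Lemmas~\ref{lemma:twoposvalues} and~\ref{lemma:3posvalues}. For each $j\neq 1$ one left-translates $\mathcal{A}_{1\mapsto j}$ by $(1\ j)$ so that it, like $\mathcal{A}_{1\mapsto 1}$, fixes $1$; after passing to inverses and deleting the point $1$, the two families live in $S_{n-1}$ and satisfy a \emph{conditional} forbidden-intersection hypothesis (the forbidden value of $|\sigma\cap\tau|$ depends on whether $\sigma$ and $\tau$ agree at the image of $j$). Lemmas~\ref{lemma:twoposvalues}/\ref{lemma:3posvalues} are designed exactly for this situation and yield $|\mathcal{A}_{1\mapsto 1}|\,|\mathcal{A}_{1\mapsto j}|\leq O(((n-3)!)^2)$, hence $|\mathcal{A}_{1\mapsto j}|=O_c((n-4)!)$, and summing over $j$ gives the required $O_c((n-3)!)$. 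This translate-and-restrict manoeuvre, together with the conditional cross lemmas, is the missing idea in your proposal.
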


In our proof, we will use the following additional notation. If $f = f(n,c)$ and $g = g(n,c)$ are functions of $n$ and $c$, we will write $f = O_c (g)$ to mean that for each $c >0$, there exists $K_c >0$ depending upon $c$ alone, such that $f(n,c) \leq K_c g(n,c)$ for all $n \in \mathbb{N}$. 

For two permutations \(\sigma,\tau \in S_{n}\), we write \(\sigma \cap \tau\) for the set \(\{i:\ \sigma(i)=\tau(i)\}\), i.e. for the set of points at which they agree. For \(\mathcal{A} \subset S_{n}\), and for distinct \(i_{1},\ldots,i_{l} \in [n]\) and distinct \(j_{1},\ldots,j_{l} \in [n]\), we write
\[\mathcal{A}_{i_{1} \mapsto j_{1}, i_{2} \mapsto j_{2}, \ldots, i_{l} \mapsto j_{l}} := \{\sigma \in \mathcal{A}: \sigma(i_{k})=j_{k}\ \forall k \in [l]\}.\]
Similarly, for \(\mathcal{A} \subset S_{n}\), and for \(S,T \subset [n]\), we write
\[\mathcal{A}_{S \to T} = \{\sigma \in \mathcal{A}:\ \sigma(S) \subset T\}\]
for the subset of permutations in \(\mathcal{A}\) which map \(S\) into \(T\).

If $H$ is an inner product space, and $W$ is a subspace of $H$, we write $P_W:H \to H$ for orthogonal projection onto $W$.

If $V$ is a finite set, and $X \subset V$, we write $\chi_{X} \in \mathbb{R}^V$ for the {\em characteristic vector} of $X$, meaning the vector whose $i$th coordinate is $1$ if $i \in X$ and 0 if $i \notin X$. We use $\mathbf{f}$ to denote the all-1's vector in $\mathbb{R}^V$. We regard $\mathbb{R}^V$ as an inner product space, equipped with the inner product
$$\langle x,y \rangle = \frac{1}{|V|} \sum_{i \in V} x_i y_i;$$
we let
$$||x||_2 = \sqrt{\frac{1}{|V|} \sum_{i \in V} x_i^2 }$$
denote the corresponding Euclidean norm.

We will need the following `stability version' of Hoffman's theorem.

\begin{lemma}
\label{lemma:stability-hoffman}
Let \(\Gamma = (V,E)\) be an $N$-vertex, \(d\)-regular graph whose adjacency matrix $A$ has eigenvalues \(d=\lambda_{1} \geq \lambda_{2} \geq \ldots \geq \lambda_{N}\). Let $M \in \{1,2,\ldots,N-1\}$, and let
\[U = \textrm{Span}\{\mathbf{f}\} \oplus \bigoplus_{i > M}\\ker(A-\lambda_i I).\]
Let $X \subset V$ be an independent set, and let \(\alpha = |X|/N\). Define
$$D = ||\chi_X - P_{U} (\chi_X)||_2,$$
i.e. $D$ is the Euclidean distance from $\chi_X$ to $U$. Then
\[D^2 \leq \frac{(1-\alpha)|\lambda_{N}| - d \alpha}{|\lambda_{N}|-|\lambda_{M}|}\alpha.\]
\end{lemma}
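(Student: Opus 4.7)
The plan is to run the standard proof of Hoffman's bound (Theorem~\ref{thm:hoffman}) but keep careful track of the slack in each inequality, so as to capture the fact that the $\chi_X$-mass sitting in the ``middle'' eigenspaces (those for $\lambda_2,\ldots,\lambda_M$) must be small when $\alpha$ is close to the Hoffman bound. Fix an orthonormal basis $v_1,v_2,\ldots,v_N$ of $\mathbb{R}^V$ (with respect to $\langle\cdot,\cdot\rangle$) consisting of eigenvectors of $A$, with $v_1=\mathbf{f}$ and $Av_i=\lambda_i v_i$. Then $U$ is spanned by $v_1$ together with $\{v_i : i > M\}$, and the desired quantity $D^2$ is
$$D^{2} \;=\; \sum_{i=2}^{M} c_i^{2},$$
where $\chi_X = \sum_{i=1}^N c_i v_i$.

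Two standard identities drive the proof. First, $c_1 = \langle \chi_X,\mathbf{f}\rangle = \alpha$ and $\|\chi_X\|_2^2 = |X|/N = \alpha$, so Parseval yields $\sum_{i\geq 2} c_i^{2} = \alpha(1-\alpha)$. Second, the independence of $X$ gives $\chi_X^{\top}A\chi_X = 0$, i.e.\ $\langle \chi_X, A\chi_X\rangle = 0$, which in the basis becomes
$$d\alpha^{2} + \sum_{i\geq 2}\lambda_i c_i^{2} \;=\; 0.$$
Now split the tail into middle and bottom: set $\beta := \sum_{i=2}^{M} c_i^{2} = D^{2}$ and $\gamma := \sum_{i>M} c_i^{2} = \alpha(1-\alpha)-\beta$. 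Using the elementary bounds $\lambda_i \geq \lambda_M$ for $2\leq i\leq M$ and $\lambda_i \geq \lambda_N$ for $i>M$, the identity above gives
$$-d\alpha^{2} \;\geq\; \lambda_M \beta + \lambda_N\bigl(\alpha(1-\alpha)-\beta\bigr),$$
which rearranges to
$$(\lambda_M - \lambda_N)\,D^{2} \;\leq\; \alpha\bigl((1-\alpha)|\lambda_N| - d\alpha\bigr),$$
using $\lambda_N = -|\lambda_N|$.

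Finally, a short sign check shows that $\lambda_M - \lambda_N \geq |\lambda_N| - |\lambda_M|$ in both cases $\lambda_M \geq 0$ (where $\lambda_M - \lambda_N = \lambda_M + |\lambda_N|$) and $\lambda_M \leq 0$ (where equality holds); dividing through then yields the claimed bound. There is no genuine obstacle in the argument: it is a direct quantitative refinement of Hoffman's proof, and the only point requiring some care is the sign-convention for $\lambda_M$, which the inequality $\lambda_M-\lambda_N\geq|\lambda_N|-|\lambda_M|$ disposes of in one line.
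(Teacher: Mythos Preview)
Your proof is correct and follows essentially the same approach as the paper's: both expand $\chi_X$ in an orthonormal eigenbasis, use $\langle \chi_X, A\chi_X\rangle = 0$ together with Parseval, split the tail into the middle block (giving $D^2$) and the bottom block, and bound each block by $\lambda_M$ and $\lambda_N$ respectively. If anything, your final step is slightly more careful than the paper's, since you explicitly justify replacing $\lambda_M-\lambda_N$ by $|\lambda_N|-|\lambda_M|$ via the inequality $\lambda_M-\lambda_N\geq|\lambda_N|-|\lambda_M|$, whereas the paper simply writes ``rearranging''.
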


For completeness, we include a proof.
\begin{proof}
Let \(u_{1} = \mathbf{f}, u_{2},\ldots,u_{N}\) be an orthonormal basis of real eigenvectors of \(A\) corresponding to the eigenvalues \(\lambda_{1},\lambda_2,\ldots,\lambda_{N}\). Write
\[\chi_{X}=\sum_{i=1}^{N} \xi_{i} u_{i}\]
as a linear combination of the eigenvectors of \(A\). Note that
\begin{equation}\label{eq:measure} \xi_{1}=\langle \chi_X, \mathbf{f} \rangle = |X|/N = \alpha\end{equation}
and
\begin{equation} \label{eq:parseval} \sum_{i=1}^{N} \xi_{i}^{2} = \langle \chi_X, \chi_X \rangle = |X|/N = \alpha.\end{equation}
Then we have the crucial property:
\[0=\frac{1}{N} \sum_{x,y \in X}A_{x,y}= \langle \chi_{X},A \chi_{X} \rangle = \sum_{i=1}^{N} \lambda_{i} \xi_{i}^{2} \geq \lambda_{1}\xi_{1}^{2} + \lambda_{N} \sum_{i=M+1}^{N} \xi_{i}^{2} + \lambda_{M} \sum_{i=2}^{M}\xi_{i}^{2}.\]
Note that
\[\sum_{i=2}^{M} \xi_{i}^{2} = D^{2}\]
and
\[\sum_{i=M+1}^{N} \xi_{i}^{2} = \alpha - \alpha^{2} - D^{2},\]
using (\ref{eq:measure}) and (\ref{eq:parseval}). Hence,
\[0 \geq \lambda_{1} \alpha^{2} + \lambda_{N} (\alpha -\alpha^{2} -D^{2}) + \lambda_{M} D^{2}.\]
Rearranging, we obtain:
\[D^{2} \leq  \frac{(1-\alpha)|\lambda_{N}| - \lambda_{1} \alpha}{|\lambda_{N}|-|\lambda_{M}|}\alpha,\]
as required.
\end{proof}

We will also need the following two technical lemmas, which we prove using Proposition \ref{prop:cross}.

\begin{lemma}
\label{lemma:twoposvalues}
Let \(\mathcal{A},\mathcal{B} \subset S_{n}\) such that for any \(\sigma \in \mathcal{A}\) and \(\tau \in \mathcal{B}\),
\begin{itemize}
\item If \(\sigma(1)=\tau(1)\), then \(|\sigma \cap \tau| \neq 2\);
\item If \(\sigma(1)\neq \tau(1)\), then \(|\sigma \cap \tau| \neq 1\).
\end{itemize}
Then
\[|\mathcal{A}||\mathcal{B}| \leq 4(1+O(1/n))((n-2)!)^{2}.\]
\end{lemma}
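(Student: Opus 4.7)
The plan is to convert many applications of Proposition~\ref{prop:cross} into a single effective bound via a first-moment argument over a uniformly random bipartition of $[n]$.

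First decompose by the value at $1$: set $\mathcal{A}^{j}=\{\sigma\in\mathcal{A}:\sigma(1)=j\}$, $a_{j}=|\mathcal{A}^{j}|$, and analogously $\mathcal{B}^{j},b_{j}$. For $S\subset[n]$ write $A_{S}=\sum_{j\in S}a_{j}$, $B_{S}=\sum_{j\in S}b_{j}$, $A=|\mathcal{A}|$, $B=|\mathcal{B}|$. The crucial observation is that for every $S\subset[n]$ the sub-families $\mathcal{A}_{S^{c}}=\bigcup_{j\notin S}\mathcal{A}^{j}$ and $\mathcal{B}_{S}=\bigcup_{k\in S}\mathcal{B}^{k}$ satisfy the no-singleton-intersection hypothesis of Proposition~\ref{prop:cross}: any $\sigma\in\mathcal{A}_{S^{c}}$ and $\tau\in\mathcal{B}_{S}$ automatically have $\sigma(1)\neq\tau(1)$, so the second hypothesis of the present lemma forces $|\sigma\cap\tau|\ne 1$. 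The same holds symmetrically for $(\mathcal{A}_{S},\mathcal{B}_{S^{c}})$. Applying Proposition~\ref{prop:cross} to both pairs and combining via the identity $A_{S^{c}}B_{S}+A_{S}B_{S^{c}}=A_{S}B+AB_{S}-2A_{S}B_{S}$ yields
\[
A_{S}B+AB_{S}-2A_{S}B_{S}\;\le\;2(1+O(1/n))((n-2)!)^{2}\quad\text{for every }S\subset[n].\quad(\ast)
\]

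Now take $S\subset[n]$ random, with each index included independently with probability $\tfrac12$. Then $\mathbb{E}[A_{S}]=A/2$, $\mathbb{E}[B_{S}]=B/2$, and splitting diagonal from off-diagonal terms in $A_{S}B_{S}=\sum_{j,k}a_{j}b_{k}\mathbf{1}_{j\in S}\mathbf{1}_{k\in S}$ gives $\mathbb{E}[A_{S}B_{S}]=\tfrac14(AB+\sum_{j}a_{j}b_{j})$. Taking expectations in $(\ast)$ therefore collapses to
\[
AB \;\le\; 4(1+O(1/n))((n-2)!)^{2}+\sum_{j=1}^{n}a_{j}b_{j}.
\]

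It remains to show $\sum_{j}a_{j}b_{j}$ is of lower order. For each $j$, restriction to $\{2,\dots,n\}$ identifies $\mathcal{A}^{j}$ and $\mathcal{B}^{j}$ with subsets of $S_{n-1}$, and any $\sigma\in\mathcal{A}^{j},\tau\in\mathcal{B}^{j}$ have $\sigma(1)=\tau(1)=j$, so the first hypothesis of the lemma forces $|\sigma\cap\tau|\neq 2$; this is precisely the no-singleton-intersection condition in $S_{n-1}$ for the restrictions. Proposition~\ref{prop:cross} applied in $S_{n-1}$ then gives $a_{j}b_{j}\le(1+O(1/n))((n-3)!)^{2}$, whence $\sum_{j}a_{j}b_{j}=O(((n-2)!)^{2}/n)$, which is absorbed into the $O(1/n)$ error and completes the proof.

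The conceptual obstacle is that any naive accounting---applying Proposition~\ref{prop:cross} separately for each value of $\sigma(1)$ and summing---produces only a bound of order $n\cdot((n-2)!)^{2}$, worse than what the lemma claims by a factor growing with $n$. The point of averaging over all $2^{n}$ bipartitions is that the quadratic cross-term $2A_{S}B_{S}$ becomes, in expectation, exactly $\tfrac12 AB$ plus the negligible diagonal contribution, and this is precisely what converts the naive factor of $n$ into the constant factor $4$.
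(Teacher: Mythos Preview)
Your proof is correct, and it takes a genuinely different route from the paper's argument.

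Both proofs rest on the same two consequences of the hypotheses: (i) whenever $\sigma(1)\neq\tau(1)$ the pair is cross--no-singleton-intersecting in $S_n$, so Proposition~\ref{prop:cross} applies directly; and (ii) for each fixed $j$, the slices $\mathcal{A}^{j},\mathcal{B}^{j}$ restrict to a cross--no-singleton-intersecting pair in $S_{n-1}$, giving $a_jb_j\le(1+O(1/n))((n-3)!)^2$. Where they differ is in how these are combined. The paper chooses a \emph{deterministic} set $J=\{j:a_j\ge |\mathcal{A}|/(2n)\}$ of popular values, uses (ii) together with popularity to bound $|\mathcal{A}|\cdot|\mathcal{B}_{1\to J}|$, and uses (i) on the single off-diagonal pair $(\mathcal{A}_{1\to J},\mathcal{B}_{1\nrightarrow J})$ to handle the rest. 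You instead apply (i) to \emph{every} bipartition $(S,S^c)$ and average; the expectation calculus makes the cross-term $2A_SB_S$ collapse to $\tfrac12 AB$ plus the diagonal $\sum_j a_jb_j$, which (ii) shows is $O(((n-2)!)^2/n)$. Your averaging argument is cleaner and avoids the ad hoc ``popular set'' step; the paper's version has the minor advantage of being fully constructive, and its structure is reused verbatim in the proof of Lemma~\ref{lemma:3posvalues}.
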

\begin{proof}
Let
\[J = \{j \in [n]:\ |\mathcal{A}_{1 \mapsto j}| \geq \frac{|\mathcal{A}|}{2n}\}.\]
Then \(|\mathcal{A} \setminus \mathcal{A}_{1 \to J}| < |\mathcal{A}|/2\), so \(|\mathcal{A}_{1 \to J}| > |\mathcal{A}|/2\).

Fix \(j \in [n]\), and consider the families
\[\mathcal{A}_{1 \mapsto j},\ \mathcal{B}_{1 \mapsto j}.\]
Notice that
\[(1\ j)\mathcal{A}_{1 \mapsto j},(1\ j)\mathcal{B}_{1 \mapsto j}\]
are families of permutations fixing 1 and with \(|\sigma \cap \tau| \neq 2\) for any \(\sigma \in (1\ j)\mathcal{A}_{1 \mapsto j}\) and any \(\tau \in (1\ j)\mathcal{B}_{1 \mapsto j}\). Restricting them to \(\{2,\ldots,n\}\) yields a pair of families \(\mathcal{E},\mathcal{F} \subset S_{\{2,\ldots,n\}}\) with \(|\sigma \cap \tau| \neq 1\ \forall \sigma \in \mathcal{E},\ \tau \in \mathcal{F}\). Applying Proposition \ref{prop:cross} to \(\mathcal{E}\) and \(\mathcal{F}\) gives
\[|\mathcal{E}||\mathcal{F}| \leq (1+O(1/n))((n-3)!)^{2},\]
so
\[|\mathcal{A}_{1 \mapsto j}||\mathcal{B}_{1 \mapsto j}| \leq (1+O(1/n))((n-3)!)^{2}.\]
Hence,
\[\frac{|\mathcal{A}|}{2n}|\mathcal{B}_{1 \mapsto j}| < (1+O(1/n))((n-3)!)^{2}\ \forall j \in J,\]
and therefore
\[|\mathcal{A}||\mathcal{B}_{1 \mapsto j}| < 2n(1+O(1/n))((n-3)!)^{2} \ \forall j \in J.\]
Summing over all \(j \in J\) gives:
\begin{equation}
\label{eq:crossbound1}
|\mathcal{A}||\mathcal{B}_{1 \to J}| < 2(1+O(1/n))((n-2)!)^{2}.
\end{equation}
Notice that no permutation in \(\mathcal{A}_{1 \to J}\) can have singleton intersection with any permutation in \(\mathcal{B}_{1 \nrightarrow J}\), and therefore by Proposition \ref{prop:cross} again, 
\[|\mathcal{A}_{1 \to J}||\mathcal{B}_{1 \nrightarrow J}| \leq (1+O(1/n))((n-2)!)^{2}.\]
Hence,
\begin{equation}
\label{eq:crossbound2}
|\mathcal{A}||\mathcal{B}_{1 \nrightarrow J}| < 2(1+O(1/n))((n-2)!)^{2}.
\end{equation}
Combining (\ref{eq:crossbound1}) and (\ref{eq:crossbound2}) gives:
\[|\mathcal{A}||\mathcal{B}| \leq 4(1+O(1/n))((n-2)!)^{2},\]
as required.
\end{proof}

\begin{lemma}
\label{lemma:3posvalues}
Let \(\mathcal{A},\mathcal{B} \subset S_{n}\) such that for any \(\sigma \in \mathcal{A}\) and any \(\tau \in \mathcal{B}\),
\begin{itemize}
\item If exactly one of \(\sigma(1)=\tau(1),\ \sigma(2)=\tau(2)\) holds, then \(|\sigma \cap \tau| \neq 2\);
\item If both hold, then \(|\sigma \cap \tau| \neq 3\);
\item If neither holds, then \(|\sigma \cap \tau| \neq 1\).
\end{itemize}
Then
\[|\mathcal{A}||\mathcal{B}| \leq 16(1+O(1/n))((n-2)!)^{2}.\]
\end{lemma}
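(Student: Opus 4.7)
The plan is to mimic the proof of Lemma \ref{lemma:twoposvalues}, applying that lemma recursively so that its two-case conclusion resolves both of the regimes one sees after splitting on the image of the point $1$. Since the hypothesis here is a three-case refinement of Lemma \ref{lemma:twoposvalues}, the natural reduction is: whenever $\sigma,\tau$ already agree at position $1$, strip that agreement off and leave a two-case hypothesis on the remaining $n-1$ positions; whenever they disagree at position $1$, the hypothesis already collapses to a two-case condition, with position $2$ playing the role of position $1$.

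Concretely, following Lemma \ref{lemma:twoposvalues}, I would introduce the heavy-image set $J = \{j \in [n] : |\mathcal{A}_{1 \mapsto j}| \geq |\mathcal{A}|/(2n)\}$, so that $|\mathcal{A}_{1 \to J}| > |\mathcal{A}|/2$. The product $|\mathcal{A}||\mathcal{B}|$ then splits as $|\mathcal{A}||\mathcal{B}_{1 \to J}| + |\mathcal{A}||\mathcal{B}_{1 \nrightarrow J}|$, and I would bound the two pieces separately.

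For the diagonal piece, fix $j \in [n]$ and pass to $(1\ j)\mathcal{A}_{1 \mapsto j}$ and $(1\ j)\mathcal{B}_{1 \mapsto j}$; left-multiplication by $(1\ j)$ preserves the agreement sets $\sigma \cap \tau$ and turns both families into collections of permutations fixing $1$, so that restricting to $\{2,\ldots,n\}$ yields $\mathcal{E}_j, \mathcal{F}_j \subset S_{n-1}$. Every pair in the original families already agreed at $1$, so after subtracting $1$ from each intersection size the three-case hypothesis collapses to: if $\sigma(2) = \tau(2)$ then $|\sigma \cap \tau| \neq 2$ in $S_{n-1}$; otherwise $|\sigma \cap \tau| \neq 1$. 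Lemma \ref{lemma:twoposvalues} applied inside $S_{n-1}$, with position $2$ playing the role of position $1$, gives $|\mathcal{A}_{1 \mapsto j}||\mathcal{B}_{1 \mapsto j}| \leq 4(1+O(1/n))((n-3)!)^{2}$. For $j \in J$ this forces $|\mathcal{A}||\mathcal{B}_{1 \mapsto j}| \leq 8n(1+O(1/n))((n-3)!)^{2}$, and summing over the at most $n$ values $j \in J$ gives $|\mathcal{A}||\mathcal{B}_{1 \to J}| \leq 8(1+O(1/n))((n-2)!)^{2}$, using $n^{2}((n-3)!)^{2} = (1+O(1/n))((n-2)!)^{2}$.

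For the off-diagonal piece, any $\sigma \in \mathcal{A}_{1 \to J}$ and $\tau \in \mathcal{B}_{1 \nrightarrow J}$ automatically satisfy $\sigma(1) \neq \tau(1)$, so the three-case hypothesis reduces to: if $\sigma(2) = \tau(2)$ then $|\sigma \cap \tau| \neq 2$, otherwise $|\sigma \cap \tau| \neq 1$. This is precisely Lemma \ref{lemma:twoposvalues} with the roles of positions $1$ and $2$ swapped, giving $|\mathcal{A}_{1 \to J}||\mathcal{B}_{1 \nrightarrow J}| \leq 4(1+O(1/n))((n-2)!)^{2}$; combined with $|\mathcal{A}_{1 \to J}| > |\mathcal{A}|/2$ this yields $|\mathcal{A}||\mathcal{B}_{1 \nrightarrow J}| \leq 8(1+O(1/n))((n-2)!)^{2}$. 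Summing the two contributions gives the claimed factor-$16$ bound. The only step requiring any real care is verifying that the conjugation-by-$(1\ j)$ plus restriction procedure cleanly converts the three-case condition on $\mathcal{A}_{1 \mapsto j}, \mathcal{B}_{1 \mapsto j}$ into the exact two-case hypothesis of Lemma \ref{lemma:twoposvalues} inside $S_{n-1}$; once that translation is checked, everything else is routine bookkeeping.
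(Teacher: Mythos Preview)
Your proposal is correct and follows essentially the same argument as the paper: define the heavy-image set for position~$1$, bound the diagonal piece by restricting to $\{2,\ldots,n\}$ and invoking Lemma~\ref{lemma:twoposvalues} inside $S_{n-1}$, and bound the off-diagonal piece by noting that $\mathcal{A}_{1\to J}$ and $\mathcal{B}_{1\nrightarrow J}$ satisfy the two-case hypothesis of Lemma~\ref{lemma:twoposvalues} (with position~$2$ in place of position~$1$). One cosmetic slip: in your final sentence you write ``conjugation-by-$(1\ j)$'' where you mean left-multiplication, as you correctly said earlier.
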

\begin{proof}
Let
\[K = \{k \in [n]: |\mathcal{A}_{1 \mapsto k}| \geq \frac{|\mathcal{A}|}{2n}\}.\]
As in the proof of Lemma \ref{lemma:twoposvalues}, we have \(|\mathcal{A} \setminus \mathcal{A}_{1 \to K}| < |\mathcal{A}|/2\), so \(|\mathcal{A}_{1 \to K}| > |\mathcal{A}|/2\).

Fix \(k \in [n]\), and consider the pair of families
\[\mathcal{A}_{1 \mapsto k},\ \mathcal{B}_{1 \mapsto k}.\]
Notice that
\[(1\ k)\mathcal{A}_{1 \mapsto k},\ (1\ k)\mathcal{B}_{1 \mapsto k}\]
are families of permutations fixing 1, such that for any \(\sigma \in (1\ j)\mathcal{A}_{1 \mapsto j},\) and any \(\tau \in (1\ j)\mathcal{B}_{1 \mapsto j}\),
\begin{itemize}
\item if \(\sigma(2) = \tau(2)\) we have \(|\sigma \cap \tau|\neq 3\);
\item if \(\sigma(2)\neq \tau(2)\) we have \(|\sigma \cap \tau| \neq 2\).
\end{itemize}
Restricting this pair of families to \(\{2,\ldots,n\}\) yields a pair of families \(\mathcal{E},\mathcal{F} \subset S_{\{2,\ldots,n\}}\) such that for any \(\sigma \in \mathcal{E}\) and any \(\tau \in \mathcal{F}\),
\begin{itemize}
\item if \(\sigma(2) = \tau(2)\) we have \(|\sigma \cap \tau|\neq 2\);
\item if \(\sigma(2)\neq \tau(2)\) we have \(|\sigma \cap \tau| \neq 1\).
\end{itemize}
Applying Lemma \ref{lemma:twoposvalues} to \(\mathcal{E}\) and \(\mathcal{F}\) gives
\[|\mathcal{E}||\mathcal{F}| \leq 4(1+O(1/n))((n-3)!)^{2},\]
so
\[|\mathcal{A}_{1 \mapsto k}||\mathcal{B}_{1 \mapsto k}| \leq 4(1+O(1/n))((n-3)!)^{2}.\]
Hence,
\[\frac{|\mathcal{A}|}{2n}|\mathcal{B}_{1 \mapsto k}| < 4(1+O(1/n))((n-3)!)^{2}\quad \forall k \in K,\]
i.e.
\[|\mathcal{A}||\mathcal{B}_{1 \mapsto k}| < 8n(1+O(1/n))((n-3)!)^{2}\quad \forall k \in K.\]
Summing over all \(k \in K\) gives
\begin{equation}
\label{eq:crossbound3}
|\mathcal{A}||\mathcal{B}_{1 \to K}| < 8(1+O(1/n))((n-2)!)^{2}.
\end{equation}
Observe that \(\mathcal{A}_{1 \to K},\ \mathcal{B}_{1 \nrightarrow K}\) are families of permutations satisfying the hypotheses of Lemma \ref{lemma:twoposvalues}, and therefore
\[|\mathcal{A}_{1 \to K}||\mathcal{B}_{1 \nrightarrow K}| \leq 4(1+O(1/n))^{2}((n-2)!)^{2}.\]
Hence,
\begin{equation}
\label{eq:crossbound4}
|\mathcal{A}||\mathcal{B}_{1 \nrightarrow K}| \leq 8(1+O(1/n))^{2}((n-2)!)^{2}
\end{equation}
Combining equations (\ref{eq:crossbound3}) and (\ref{eq:crossbound4}) gives
\[|\mathcal{A}||\mathcal{B}| \leq 16(1+O(1/n))((n-2)!)^{2}\]
as required.
\end{proof}

We can now prove Theorem \ref{thm:roughstability}.

\begin{proof}[Proof of Theorem \ref{thm:roughstability}.]
Let \(\mathcal{A} \subset S_{n}\) with no singleton intersection, and with \(|\mathcal{A}| \geq c(n-2)!\).

Note that by taking $K_c \geq 1/c^{26}$, we may assume that $c \geq n^{-1/13}$ (otherwise the conclusion of the theorem holds trivially). Note also that by taking $K_c \geq n_0(c)^2$, we may assume throughout that $n \geq n_0(c)$ for any integer $n_0(c)$ depending only on $c$.

We first apply Lemma \ref{lemma:stability-hoffman} to our graph $\Gamma=\Gamma_n$, with $X = \mathcal{A}$ and $M = n! - (f^{(n-2,2)})^2 - (f^{(n-2,1^2)})^2$, so that
$$\{\lambda_{M+1},\lambda_{M+2} \ldots, \lambda_N\} = \{\lambda_{(n-2,2)}, \lambda_{(n-2,1^2)}\},$$
and
$$U = \textrm{Span}\{\mathbf{f}\} \oplus U_{(n-2,2)} \oplus U_{(n-2,1^2)}.$$
By (\ref{eq:uniformbound}) and the table following it, we have
$$|\lambda_M| \leq C(n-3)! \leq \frac{C'}{n}|\lambda_N|,$$
where $C'$ is an absolute constant. Hence, by Lemma \ref{lemma:stability-hoffman}, we have
\[||\chi_{\mathcal{A}} - P_{U}(\chi_{\mathcal{A}})||_2^{2} \leq (1-c)(1+O(1/n))||\chi_{\mathcal{A}}||_2^{2}.\]

Now let
\[U_{t} = \bigoplus_{\alpha \vdash n:\ \alpha \textrm{ is }t\textrm{-fat}} U_{\alpha}.\]
Since the partitions $(n)$, $(n-2,2)$ and $(n-2,1^2)$ are all 2-fat, we have \(U \leq U_{2}\), so
\[||\chi_{\mathcal{A}} - P_{U_{2}}(\chi_{\mathcal{A}})||_2^{2} \leq ||\chi_{\mathcal{A}} - P_{U}(\chi_{\mathcal{A}})||_2^{2} \leq (1-c)(1+O(1/n))||\chi_{\mathcal{A}}||_2^{2}.\]

It follows from the proof of Lemma 7 in \cite{dezafranklstability} that for any fixed \(c > 0\) and \(t \in \mathbb{N}\), if
\[||\chi_{\mathcal{A}} - P_{U_{t}}(\chi_{\mathcal{A}})||_2^{2} \leq (1-c)(1+O(1/n))||\chi_{\mathcal{A}}||_2^{2},\]
then there exist \(i_{1},\ldots,i_{t}\) and \(j_{1},\ldots,j_{t}\) such that
\[|A_{i_{1}\mapsto j_{1},\ldots,i_{t} \mapsto j_{t}}| \geq f(n) c(n-2t)!,\]
where \(f(n) = \Theta(\sqrt{n/\log n})\), provided $c = \Omega(n^{-1/12})$ (which holds by assumption).

Applying this with \(t=2\) shows that there exist \(i,j,k\) and \(l \in [n]\) such that
\[|\mathcal{A}_{i \mapsto j,k \mapsto l}| \geq f(n) c(n-4)!.\]

Without loss of generality, we may assume \(i=j=1,k=l=2\). Hence, we have
\begin{equation}
\label{eq:lowerbound1122}
|\mathcal{A}_{1 \mapsto 1,2 \mapsto 2}| \geq f(n)c(n-4)!.
\end{equation}

Our aim is now to show that all but at most \(o((n-2)!)\) of the permutations in \(\mathcal{A}\) fix either \(1\) or \(2\). To show this, it will suffice to prove the following.

\begin{claim}
If \(j \neq 1\) and \(l\neq 2\), then \(|\mathcal{A}_{1 \mapsto j,2 \mapsto l}| \leq o((n-4)!)\).
\end{claim}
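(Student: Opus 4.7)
The plan is to bound $|\mathcal{A}_{1\mapsto 1,\,2\mapsto 2}|\cdot|\mathcal{A}_{1\mapsto j,\,2\mapsto l}|$ from above via a cross-independence argument applied after restriction to $S_{n-2}$, and then to divide through by the lower bound \eqref{eq:lowerbound1122}. The key observation is that for any $\sigma\in\mathcal{A}_{1\mapsto 1,\,2\mapsto 2}$ and $\tau\in\mathcal{A}_{1\mapsto j,\,2\mapsto l}$, the assumptions $j\neq 1,\,l\neq 2$ force $\sigma$ and $\tau$ to disagree at both $1$ and $2$, so any point of agreement lies in $\{3,\ldots,n\}$, and the no-singleton-intersection property gives $|\sigma\cap\tau|\neq 1$.

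Consider first the generic case $j,l\in\{3,\ldots,n\}$. I would restrict each $\sigma$ to $\sigma'=\sigma|_{\{3,\ldots,n\}}\in S_{\{3,\ldots,n\}}$, and convert each $\tau$ into $\tau^*\in S_{\{3,\ldots,n\}}$ by redirecting the two positions $i_1=\tau^{-1}(1)$ and $i_2=\tau^{-1}(2)$ (which lie in $\{3,\ldots,n\}$) to the missing values $j$ and $l$; on the remaining positions $\tau^*$ agrees with $\tau$. A direct count then yields
\[ |\sigma\cap\tau| \;=\; |\sigma'\cap\tau^*| - 1\{\sigma'(i_1)=j\} - 1\{\sigma'(i_2)=l\}. \]
Passing to inverses $\tilde\sigma=(\sigma')^{-1}$ and $\tilde\tau=(\tau^*)^{-1}$ preserves agreement counts, and repackages the $\tau$-dependent condition $\sigma'(i_1)=j$ as the fixed-coordinate condition $\tilde\sigma(j)=\tilde\tau(j)$ (and similarly for $l$). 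After relabelling $j\mapsto 1,\ l\mapsto 2$ within $\{3,\ldots,n\}$, the resulting trichotomy on pairs $(\tilde\sigma,\tilde\tau)$ matches exactly the hypothesis of Lemma \ref{lemma:3posvalues} applied in $S_{n-2}$, so
\[ |\mathcal{A}_{1\mapsto 1,\,2\mapsto 2}|\,|\mathcal{A}_{1\mapsto j,\,2\mapsto l}| \;\leq\; 16(1+O(1/n))\bigl((n-4)!\bigr)^2. \]
Combining with $|\mathcal{A}_{1\mapsto 1,\,2\mapsto 2}|\geq f(n)c(n-4)!$ and using $f(n)=\Theta(\sqrt{n/\log n})$ together with the standing assumption $c\geq n^{-1/13}$ then yields $|\mathcal{A}_{1\mapsto j,\,2\mapsto l}|=O((n-4)!/(f(n)c))=o((n-4)!)$, as required.

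The edge cases where $\{j,l\}\cap\{1,2\}\neq\emptyset$ follow the same strategy but require less work: if $\{j,l\}=\{1,2\}$ then $\tau$ already preserves $\{3,\ldots,n\}$ as a set and Proposition \ref{prop:cross} applies directly in $S_{n-2}$; if exactly one of $j,l$ lies in $\{1,2\}$ then only one extra position arises in the definition of $\tau^*$ and Lemma \ref{lemma:twoposvalues} suffices. The main obstacle is not conceptual but notational --- it is the bookkeeping required to define $\tau^*$ in each subcase and to check that the arithmetic relating $|\sigma\cap\tau|$ to $|\sigma'\cap\tau^*|$ translates the no-singleton-intersection hypothesis into exactly the hypothesis of the appropriate cross-lemma. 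The inversion trick is the device that makes this translation clean, by converting the $\tau$-dependent "special positions" $i_1,i_2$ into fixed value-coordinates $j,l$.
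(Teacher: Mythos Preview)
Your proposal is correct and follows essentially the same route as the paper. The paper packages the construction of your $\tau^*$ as left-multiplication by an explicit permutation (e.g.\ $(1\ 3)(2\ 4)$ in the generic case, $(1\ 3\ 2)$ when $j=2$), then inverts and restricts to $\{3,\ldots,n\}$; unwinding this, one recovers precisely your $\tau^*$ and the same agreement formula, the same inversion trick converting $i_1,i_2$ into the fixed value-coordinates $j,l$, and the same application of Proposition~\ref{prop:cross}, Lemma~\ref{lemma:twoposvalues}, or Lemma~\ref{lemma:3posvalues} according to whether $|\{j,l\}\cap\{1,2\}|$ equals $2$, $1$, or $0$.
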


\begin{proof}[Proof of claim.]
First, we deal with the case \(j=2,\ l=1\). Clearly, restricting the families
\[\mathcal{A}_{1 \mapsto 2,2 \mapsto 1},\ \mathcal{A}_{1 \mapsto 1,2 \mapsto 2}\]
to \(\{3,4,\ldots,n\}\) (`deleting 1 and 2') produces two families \(\mathcal{C},\mathcal{D} \subset S_{\{3,\ldots,n\}}\) such that $|\sigma \cap \tau| \neq 1\ \forall \sigma \in \mathcal{C},\tau \in \mathcal{D}$. Applying Proposition \ref{prop:cross} to $\mathcal{C},\mathcal{D}$ gives
\[|\mathcal{A}_{1 \mapsto 2,2 \mapsto 1}||\mathcal{A}_{1 \mapsto 1,2 \mapsto 2}| = |\mathcal{C}||\mathcal{D}| \leq (1+O(1/n))((n-4)!)^{2}.\]
Therefore, by (\ref{eq:lowerbound1122}),
\[|\mathcal{A}_{1 \mapsto 2,2\mapsto 1}| \leq o((n-4)!),\]
as required.

Next, we deal with the case $j=2,\ l \notin \{1,2\}$. Without loss of generality, we may assume that \(l=3\); we need to show that \(|\mathcal{A}_{1 \mapsto 2,2 \mapsto 3}| \leq o((n-4)!)\).

Consider the pair of families
\[\mathcal{A}_{1 \mapsto 1,2 \mapsto 2},\ (1\ 3\ 2)\mathcal{A}_{1 \mapsto 2,2 \mapsto 3}.\]
Let \(\sigma \in \mathcal{A}_{1 \mapsto 1,2 \mapsto 2},\ \tau' = (1 \ 3\ 2)\tau\) where \(\tau \in \mathcal{A}_{1 \mapsto 2, 2 \mapsto 3}\). Let \(a = \sigma^{-1}(3)\). Observe that
\begin{itemize}
\item If \(\tau(a)=1\), i.e. \(\tau'^{-1}(3) = a\), then \(\sigma(a) = \tau'(a) = 3\), so \(\sigma\) and \(\tau'\) agree wherever \(\sigma\) and \(\tau\) agree, and also at \(1,2\) and \(a\), but nowhere else. Hence, \(\sigma\) and \(\tau'\) cannot agree at exactly 4 points.
\item If \(\tau(a) \neq 1\), i.e. \(\tau'^{-1}(3) \neq a\), then \(\sigma(a) \neq \tau'(a)\), and therefore \(\sigma\) and \(\tau'\) agree wherever \(\sigma\) and \(\tau\) agree, and also at \(1\) and \(2\), but nowhere else. Hence, \(\sigma\) and \(\tau'\) cannot agree at exactly 3 points.
\end{itemize}
Let
\[\mathcal{C} = (\mathcal{A}_{1 \mapsto 1,2 \mapsto 2})^{-1},\quad \mathcal{D} = ((1 \ 3 \ 2)\mathcal{A}_{1 \mapsto 2,2 \mapsto 3})^{-1}.\]
Then \(\mathcal{C}\) and \(\mathcal{D}\) are families of permutations fixing both \(1\) and \(2\). Moreover, for any \(\rho \in \mathcal{C}\) and any \(\pi \in \mathcal{D}\),
\begin{itemize}
\item if \(\rho(3)=\pi(3)\), then \(|\rho \cap \pi| \neq 4\);
\item if \(\rho(3) \neq \pi(3)\), then \(|\rho \cap \pi| \neq 3\).
\end{itemize}
Restrict \(\mathcal{C}\) and \(\mathcal{D}\) to \(\{3,4,\ldots,n\}\) (`delete 1 and 2') to obtain the families \(\mathcal{C}',\mathcal{D}' \subset S_{\{3,\ldots,n\}}\). Observe that for any \(\rho' \in \mathcal{C}'\) and any \(\pi' \in \mathcal{D}'\),
\begin{itemize}
\item if \(\rho'(3)=\pi'(3)\), then \(|\rho' \cap \pi'| \neq 2\);
\item if \(\rho'(3) \neq \pi'(3)\), then \(|\rho' \cap \pi'| \neq 1\).
\end{itemize} 
Clearly, \(|\mathcal{C}'| = |\mathcal{A}_{1 \mapsto 1,2 \mapsto 2}|\) and \(|\mathcal{D}'| = |\mathcal{A}_{1 \mapsto 2,2 \mapsto 3}|\). Applying Lemma \ref{lemma:twoposvalues} to \(\mathcal{C}'\) and \(\mathcal{D}'\) (with ground set \(\{3,\ldots,n\}\)) shows that
\[|\mathcal{A}_{1 \mapsto 1,2 \mapsto 2}||\mathcal{A}_{1 \mapsto 2,2\mapsto 3}| = |\mathcal{C}'||\mathcal{D}'| \leq 4(1+O(1/n))^{2} ((n-4)!)^{2}.\]
Therefore, by (\ref{eq:lowerbound1122}),
\[|\mathcal{A}_{1 \mapsto 2,2 \mapsto 3}| \leq o((n-4)!)\]
as required.

The case $l=1,\ j \notin \{1,2\}$ is the same as the previous case. It remains to deal with the case \(\{j,l\} \cap \{1,2\} = \emptyset\). Without loss of generality, we may assume that \(j=3\) and \(l=4\); we just need to show that \(|\mathcal{A}_{1 \mapsto 3,2 \mapsto 4}| \leq o((n-4)!)\).

Consider the pair of families
\[\mathcal{A}_{1 \mapsto 1,2 \mapsto 2},\ (1\ 3)(2 \ 4)\mathcal{A}_{1 \mapsto 3,2 \mapsto 4}.\]
Let \(\sigma \in \mathcal{A}_{1 \mapsto 1,2 \mapsto 2},\tau' = (1 \ 3)(2 \ 4)\tau\) where \(\tau \in \mathcal{A}_{1 \mapsto 3, 2 \mapsto 4}\). Let $a = \sigma^{-1}(3)$, and let $b=\sigma^{-1}(4)$.

If \(\tau(a)=1\), i.e. \(\tau'^{-1}(3) = a\), then \(\sigma(a) = \tau'(a) = 3\). Similarly, if \(\tau(b) = 2\), i.e. \(\tau'^{-1}(4)=b\), then \(\sigma(b) = \tau'(b)=4\). Hence, \(\sigma\) and \(\tau'\) agree wherever \(\sigma\) and \(\tau\) agree, and also at \(1\) and \(2\), and possibly at \(a\) or \(b\), but nowhere else.

Let
\[\mathcal{C} = (\mathcal{A}_{1 \mapsto 1,2 \mapsto 2})^{-1},\ \mathcal{D} = ((1 \ 3)(2 \ 4)\mathcal{A}_{1 \mapsto 3,2 \mapsto 4})^{-1}.\]
Then \(\mathcal{C}\) and \(\mathcal{D}\) are families of permutations fixing both \(1\) and \(2\). Moreover, for any \(\rho \in \mathcal{C}\) and any \(\pi \in \mathcal{D}\),
\begin{itemize}
\item if exactly one of \(\rho(3)=\pi(3)\) and \(\rho(4)=\pi(4)\) holds, then \(|\rho \cap \pi| \neq 4\);
\item if both hold, then \(|\rho \cap \pi| \neq 5\);
\item if neither hold, then \(|\rho \cap \pi| \neq 3\).
\end{itemize}
Restrict \(\mathcal{C}\) and \(\mathcal{D}\) to \(\{3,4,\ldots,n\}\) (`delete 1 and 2') to obtain the families \(\mathcal{C}',\mathcal{D}' \subset S_{\{3,\ldots,n\}}\); note that for any \(\rho' \in \mathcal{C}'\) and any \(\pi' \in \mathcal{D}'\),
\begin{itemize}
\item if exactly one of \(\rho'(3)=\pi'(3)\) and \(\rho'(4)=\pi'(4)\) holds, then \(|\rho' \cap \pi'| \neq 2\);
\item if both hold, then \(|\rho' \cap \pi'| \neq 3\);
\item if neither hold, then \(|\rho' \cap \pi'| \neq 1\).
\end{itemize}
Clearly, \(|\mathcal{C}'| = |\mathcal{A}_{1 \mapsto 1,2 \mapsto 2}|\) and \(|\mathcal{D}'| = |\mathcal{A}_{1 \mapsto 3,2 \mapsto 4}|\).

Applying Lemma \ref{lemma:3posvalues} to \(\mathcal{C}'\) and \(\mathcal{D}'\) (with ground set \(\{3,4,\ldots,n\}\)) shows that
\[|\mathcal{A}_{1 \mapsto 1,2 \mapsto 2}||\mathcal{A}_{1 \mapsto 3,2\mapsto 4}| = |\mathcal{C}'||\mathcal{D}'| \leq 4(1+O(1/n))^{2} ((n-4)!)^{2}.\]
Therefore, by (\ref{eq:lowerbound1122}),
\[|\mathcal{A}_{1 \mapsto 3,2 \mapsto 4}| \leq o((n-4)!)\]
as required.
\end{proof}

Summing the inequality in the above claim over all \((n-1)^{2}\) possible pairs $(j,l)$ gives:
\[|\mathcal{A}\setminus (\mathcal{A}_{1 \mapsto 1} \cup \mathcal{A}_{2 \mapsto 2})| = \sum_{(j,l):\ j \neq 1, l \neq 2} |\mathcal{A}_{1 \mapsto j,2 \mapsto l}| \leq o((n-2))!.\]

It follows that \(|\mathcal{A}_{1 \mapsto 1} \cup \mathcal{A}_{2 \mapsto 2}| \geq c(n-2)!-o((n-2)!)\), and therefore
\[\max(|\mathcal{A}_{1 \mapsto 1}|,|\mathcal{A}_{2 \mapsto 2}|) \geq \tfrac{1}{2}(1-o(1))c(n-2)! > c'(n-2)!,\]
where \(c' := c/4\), provided $n$ is sufficiently large depending on $c$. Without loss of generality, we may assume that
\begin{equation}
\label{eq:lowerbound1}
|\mathcal{A}_{1 \mapsto 1}| \geq c'(n-2)!.
\end{equation}

Our next aim is to show that all but at most \(O((n-3)!)\) of the permutations in \(\mathcal{A}\) fix 1. To show this, it suffices to prove the following.

\begin{claim}
For each $j \neq 1$, we have \(|\mathcal{A}_{1 \mapsto j}| \leq O(1/c)(n-5)!\).
\end{claim}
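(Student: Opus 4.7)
The plan is to pair $\mathcal{A}_{1\mapsto j}$ against the sub-family $\mathcal{A}_{1\mapsto 1,\,2\mapsto 2}$, whose size is lower-bounded by (\ref{eq:lowerbound1122}), mirroring the previous claim's strategy at one further level of decomposition. I decompose $\mathcal{A}_{1\mapsto j}=\bigsqcup_{l\in[n]\setminus\{j\}}\mathcal{A}_{1\mapsto j,\,2\mapsto l}$ by the value of $\tau(2)$. For each $l\neq 2$, the hypotheses of the previous claim apply to the pair $(j,l)$ (since $j\neq 1$ and $l\neq 2$), so this sub-case is controlled directly.

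The residual case $l=2$ (possible only when $j\neq 2$) is handled as follows. For $\sigma\in\mathcal{A}_{1\mapsto 1,\,2\mapsto 2}$ and $\tau\in\mathcal{A}_{1\mapsto j,\,2\mapsto 2}$, apply the transposition $(1\ j)$ to $\tau$; since $j\neq 2$, the resulting $\tau':=(1\ j)\tau$ also fixes both $1$ and $2$. Setting $a=\sigma^{-1}(j)$ and $b=\tau^{-1}(1)$ (both in $\{3,\ldots,n\}$), a direct computation of $\sigma\cap\tau'$ shows that on the restricted ground set $\{3,\ldots,n\}$ one has $|\sigma_r\cap\tau'_r|=|\sigma\cap\tau|-1+\mathbb{1}[a=b]$. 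Since $\sigma$ and $\tau$ already agree at position $2$ and $|\sigma\cap\tau|\neq 1$, one has $|\sigma\cap\tau|\geq 2$, and consequently the restricted pair avoids singleton intersection precisely when $a=b$.

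Accordingly I split the pairing into sub-fibres. In the coincidence sub-fibre $a=b$, Proposition \ref{prop:cross} applied on $S_{\{3,\ldots,n\}}$ yields the product bound $(1+O(1/n))((n-4)!)^2$. In the generic sub-fibre $a\neq b$, I further compose with a suitable $3$-cycle such as $(1\ j\ l^*)$ (for an appropriate auxiliary value $l^*$ chosen from a further refinement on $\tau(3)$) and restrict to a ground set of size $n-3$, on which the resulting pair avoids singleton intersections; Proposition \ref{prop:cross} then delivers the sharper product bound $(1+O(1/n))((n-5)!)^2$. Aggregating by pigeonhole on $\sigma^{-1}(j)$ within $\mathcal{A}_{1\mapsto 1,\,2\mapsto 2}$ combines these two sub-fibre bounds into $|\mathcal{A}_{1\mapsto j,\,2\mapsto 2}|\leq O(1/c)(n-5)!$; summing across $l$ gives the claim.

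The main obstacle lies in the generic sub-fibre analysis: identifying the correct $3$-cycle and the auxiliary coordinate-refinement of $\tau$ so that the restricted pair genuinely has no singleton intersection on a ground set of size $n-3$, and then combining this with the coincidence sub-fibre bound via a pigeonhole argument that realises the tighter $(n-5)!$ estimate rather than the weaker $(n-4)!$ estimate that arises from naive summation. The structure of this case analysis closely parallels the final (three-position) sub-case of the previous claim's proof.
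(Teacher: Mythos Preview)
Your plan has a genuine quantitative gap, and it also misses the key simplification the paper uses at this point.

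\textbf{The gap.} You propose to decompose $\mathcal{A}_{1\mapsto j}=\bigsqcup_{l\neq j}\mathcal{A}_{1\mapsto j,\,2\mapsto l}$ and, for each $l\neq 2$, invoke the previous claim. But that claim only gives $|\mathcal{A}_{1\mapsto j,\,2\mapsto l}|\leq o((n-4)!)$; tracing its proof, the actual bound is of order $(n-4)!/(cf(n))$ with $f(n)=\Theta(\sqrt{n/\log n})$. Summing this over the $\sim n$ values of $l$ yields only $O\bigl((n-3)!/(cf(n))\bigr)$, which is larger than the target $O(1/c)(n-5)!$ by a factor of roughly $n^{3/2}\sqrt{\log n}$. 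So even granting your residual-case analysis in full, the aggregation step ``summing across $l$ gives the claim'' fails. Moreover, the residual-case sketch (``compose with a suitable $3$-cycle $(1\ j\ l^*)$ for an appropriate auxiliary value $l^*$'') is not a proof: you have not specified the cycle, the refinement, or why the restricted pair avoids singleton intersection on a ground set of size $n-3$.

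\textbf{What the paper does instead.} The crucial point is that, by this stage of the argument, the much stronger bound (\ref{eq:lowerbound1}), namely $|\mathcal{A}_{1\mapsto 1}|\geq c'(n-2)!$ with $c'=c/4$, is already available --- not merely (\ref{eq:lowerbound1122}). The paper therefore pairs the \emph{whole} of $\mathcal{A}_{1\mapsto j}$ against the whole of $\mathcal{A}_{1\mapsto 1}$, with no decomposition by $\tau(2)$. After left-translating by $(1\ j)$, both families fix $1$; one then takes \emph{inverses}, which converts the $\sigma$-dependent condition ``$\sigma^{-1}(j)=\tau^{-1}(1)$'' into the fixed-coordinate condition ``$\rho(j)=\pi(j)$''. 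Restricting to $\{2,\ldots,n\}$ now puts the pair exactly into the hypotheses of Lemma~\ref{lemma:twoposvalues}, giving
\[
|\mathcal{A}_{1\mapsto 1}|\,|\mathcal{A}_{1\mapsto j}|\;\leq\;4(1+O(1/n))\bigl((n-3)!\bigr)^{2},
\]
and dividing by $c'(n-2)!$ yields the claim in one stroke. Your approach never uses (\ref{eq:lowerbound1}), never uses the inversion trick, and never invokes Lemma~\ref{lemma:twoposvalues} --- all three are what make the paper's argument both short and quantitatively sharp.
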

\begin{proof}[Proof of claim.]
Fix \(j \neq 1\), and consider the pair of families
\[\mathcal{A}_{1 \mapsto 1},\quad (1\ j)\mathcal{A}_{1 \mapsto j}.\]
Let \(\sigma \in \mathcal{A}_{1 \mapsto 1},\tau' = (1 \ j)\tau\) where \(\tau \in \mathcal{A}_{1 \mapsto j}\). Let \(a = \sigma^{-1}(j)\). If \(\tau(a)=1\), i.e. \(\tau'^{-1}(j) = a\), then \(\sigma(a) = \tau'(a) = j\), so \(\sigma\) and \(\tau'\) agree wherever \(\sigma\) and \(\tau\) agree, and also at \(1\) and \(a\), but nowhere else. Hence, \(\sigma\) and \(\tau'\) cannot agree at exactly 3 points. If \(\tau(a) \neq 1\), i.e. \(\tau'^{-1}(j) \neq a\), then \(\sigma(a) \neq \tau'(a)\), so \(\sigma\) and \(\tau'\) agree wherever \(\sigma\) and \(\tau\) agree, and also at \(1\), but nowhere else. Hence, \(\sigma\) and \(\tau'\) cannot agree at exactly 2 points.

Let
\[\mathcal{C} = (\mathcal{A}_{1 \mapsto 1})^{-1},\ \mathcal{D} = ((1 \ j)\mathcal{A}_{1 \mapsto j})^{-1}.\]
Then \(\mathcal{C}\) and \(\mathcal{D}\) are families of permutations fixing \(1\). Moreover, for any \(\rho \in \mathcal{C}\) and any \(\pi \in \mathcal{D}\),
\begin{itemize}
\item if \(\rho(j)=\pi(j)\), then \(|\rho \cap \pi| \neq 3\);
\item if \(\rho(j) \neq \pi(j)\), then \(|\rho \cap \pi| \neq 2\).
\end{itemize}
Restrict \(\mathcal{C}\) and \(\mathcal{D}\) to \(\{2,3,\ldots,n\}\) (`delete 1') to obtain the families \(\mathcal{C}',\mathcal{D}' \subset S_{\{2,\ldots,n\}}\). Observe that for any \(\rho' \in \mathcal{C}'\) and any \(\pi' \in \mathcal{D}'\),
\begin{itemize}
\item if \(\rho'(j)=\pi'(j)\), then \(|\rho' \cap \pi'| \neq 2\);
\item if \(\rho'(j) \neq \pi'(j)\), then \(|\rho' \cap \pi'| \neq 1\).
\end{itemize}

Clearly, \(|\mathcal{C}'| = |\mathcal{A}_{1 \mapsto 1,2 \mapsto 2}|\) and \(|\mathcal{D}'| = |\mathcal{A}_{1 \mapsto 2,2 \mapsto 3}|\). Applying Lemma \ref{lemma:twoposvalues} to \(\mathcal{C}',\mathcal{D}'\) yields
\[|\mathcal{C}'||\mathcal{D}'|\leq 4(1+O(1/n))((n-3)!)^{2},\]
and therefore
\[|\mathcal{A}_{1 \mapsto 1}||\mathcal{A}_{1 \mapsto j}| \leq 4(1+O(1/n))((n-3)!)^{2}.\]
Hence, by (\ref{eq:lowerbound1}),
\[|\mathcal{A}_{1 \mapsto j}| \leq \frac{4(1+O(1/n))((n-3)!)^{2}}{c'(n-2)!} = O(1/c)(n-5)!,\]
proving the claim.
\end{proof}

Summing the inequality of the above claim over all \(j \neq 1\) gives
\begin{equation}\label{eq:almost} |\mathcal{A} \setminus \mathcal{A}_{1 \mapsto 1}| = \sum_{j \neq 1} |\mathcal{A}_{1 \mapsto j}| \leq O(1/c)(n-4)!.\end{equation}

Observe that \(\mathcal{A}_{1 \mapsto 1} \subset S_n\) is a 2-intersecting family of permutations all fixing 1. Restricting $\mathcal{A}_{1 \mapsto 1}$ to \(\{2,\ldots,n\}\) yields a 1-intersecting family of permutations \(\mathcal{A}_{1 \mapsto 1}' \subset S_{\{2,\ldots,n\}}\), with
$$|\mathcal{A}_{1 \mapsto 1}'| \geq c'(n-2)!.$$

Recall the following stability theorem for 1-intersecting families of permutations, proved by the author in \cite{cameronkuconj}.

\begin{theorem}
\label{thm:roughstabilityintersecting}
Let \(c > 0\) be a positive constant. If \(\mathcal{A}\subset S_{n}\) is a 1-intersecting family of permutations with \(|\mathcal{A}|\geq c(n-1)!\), then there exist \(i,j \in [n]\) such that all but at most \(O_{c}((n-2)!)\) permutations in \(\mathcal{A}\) map \(i\) to \(j\).
\end{theorem}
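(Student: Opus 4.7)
The plan is to mirror the strategy of the proof of Theorem \ref{thm:roughstability}, one level simpler (the $t=1$ analogue). The first step is to apply Lemma \ref{lemma:stability-hoffman} to the derangement graph $\Gamma' = \Cay(S_n, D_n)$, where $D_n \subset S_n$ is the set of derangements; its independent sets are precisely the 1-intersecting families. Direct computation via Theorem \ref{thm:normalcayley} gives the extreme eigenvalues $\lambda_{(n)} = d_n$ and, using $\chi_{(n-1,1)}(\sigma) = \#\{\text{fixed points of }\sigma\} - 1 = -1$ on $D_n$, $\lambda_{(n-1,1)} = -d_n/(n-1)$. Lemmas \ref{lemma:cayley} and \ref{lemma:sign}, together with the dimension bound $f^\alpha \geq c_1 n^2$ for 1-medium $\alpha$, then imply that every other eigenvalue has absolute value $O(|\lambda_N|/n)$. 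Choosing $M$ so that only $\lambda_{(n-1,1)}$ lies beyond position $M$, Lemma \ref{lemma:stability-hoffman} yields
\[\|\chi_\mathcal{A} - P_{U_1}(\chi_\mathcal{A})\|_2^2 \leq (1-c)(1+O(1/n))\|\chi_\mathcal{A}\|_2^2,\]
where $U_1 = \textrm{Span}\{\mathbf{f}\} \oplus U_{(n-1,1)}$ is the 1-fat subspace.

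Next, invoking Lemma 7 of \cite{dezafranklstability} with $t = 1$ (after the standard reduction to $c \geq \Omega(n^{-1/12})$, exactly as at the start of the proof of Theorem \ref{thm:roughstability}), I extract a pair $i,j \in [n]$ with $|\mathcal{A}_{i \mapsto j}| \geq \varphi(n)\, c\, (n-2)!$, where $\varphi(n) = \Theta(\sqrt{n/\log n})$. Without loss of generality $i = j = 1$.

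For each $k \neq 1$, restrict $\mathcal{A}_{1 \mapsto 1}$ and $(1\ k)\mathcal{A}_{1 \mapsto k}$ to $\{2,\ldots,n\}$ to obtain families $\tilde{\mathcal{C}}, \tilde{\mathcal{D}} \subset S_{\{2,\ldots,n\}}$. A short case analysis on $\tau(i)$ (entirely parallel to Claim 2 in the proof of Theorem \ref{thm:roughstability}) shows that any agreement $\sigma(i) = \tau(i)$ with $\sigma \in \mathcal{A}_{1 \mapsto 1}$, $\tau \in \mathcal{A}_{1 \mapsto k}$ must occur at some $i \geq 2$ with $\tau(i) \notin \{1, k\}$, and hence descends to an agreement between $\tilde\sigma$ and $\widetilde{(1\ k)\tau}$ in $S_{\{2,\ldots,n\}}$; thus $\tilde{\mathcal{C}},\tilde{\mathcal{D}}$ are cross-1-intersecting. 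Applying the cross-independent Hoffman bound (Theorem \ref{thm:crosshoffman}) to the derangement graph on $S_{n-1}$ then gives
\[|\mathcal{A}_{1 \mapsto 1}|\,|\mathcal{A}_{1 \mapsto k}| \leq (1+O(1/n))\bigl((n-2)!\bigr)^2.\]

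Finally, the plan is to bootstrap. Summing over $k \neq 1$ yields $|\mathcal{A}| - |\mathcal{A}_{1 \mapsto 1}| \leq (n-1)(1+O(1/n))((n-2)!)^2/|\mathcal{A}_{1 \mapsto 1}|$, and combining with $|\mathcal{A}| \geq c(n-1)!$, writing $x = |\mathcal{A}_{1 \mapsto 1}|$, produces the quadratic inequality $x^2 - c(n-1)!\,x + (1+O(1/n))(n-1)((n-2)!)^2 \geq 0$. Its smaller root is $(1+o(1))(n-2)!/c$, which is strictly smaller than the extraction-step lower bound $\varphi(n)c(n-2)!$ (since $\varphi(n)c^2 = \Omega(n^{1/3}/\sqrt{\log n}) \to \infty$ in the allowed regime), so $x$ must exceed the larger root, giving $x \geq c(n-1)! - O_c((n-2)!)$. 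Feeding this back into the cross-intersection bound then yields $|\mathcal{A} \setminus \mathcal{A}_{1 \mapsto 1}| \leq O_c((n-2)!)$, as required. The hardest step will be the case analysis behind the cross-intersection reduction: one must check carefully that left-multiplication by the single transposition $(1\ k)$ preserves enough of the agreement structure, after restriction to $\{2,\ldots,n\}$, to transfer cross-1-intersection from $S_n$ down to $S_{n-1}$.
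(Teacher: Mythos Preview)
The paper does not itself prove this theorem --- it is quoted from \cite{cameronkuconj} and used as a black box inside the proof of Theorem~\ref{thm:roughstability}. So there is no ``paper's own proof'' to compare against.

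Your proposed argument is correct, and it amounts to the $t=1$ streamlining of the paper's own proof of Theorem~\ref{thm:roughstability}, using only tools already present here (Lemma~\ref{lemma:stability-hoffman}, Theorem~\ref{thm:crosshoffman}, and the same external Lemma~7 of \cite{dezafranklstability}). The cross-intersection reduction you flagged as the ``hardest step'' is in fact cleaner than its $t=2$ analogue (Claim~2): if $\sigma\in\mathcal{A}_{1\mapsto 1}$ and $\tau\in\mathcal{A}_{1\mapsto k}$ agree at $i$, then necessarily $i\geq 2$, and the common value $v=\sigma(i)=\tau(i)$ satisfies $v\neq 1$ (since $\sigma(1)=1$, $i\neq 1$) and $v\neq k$ (since $\tau(1)=k$, $i\neq 1$), so the transposition $(1\ k)$ fixes $v$ and the agreement survives restriction to $\{2,\ldots,n\}$. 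No conditional structure of the kind in Lemma~\ref{lemma:twoposvalues} is needed --- you get genuine cross-$1$-intersection in $S_{n-1}$. The derangement graph on $S_{n-1}$ has $\nu=|\lambda_N|=d_{n-1}/(n-2)$ for $n$ large (the $1$-tall eigenvalues are $O(n)$, and the $1$-medium ones are $O(|\lambda_N|/n)$ by Lemma~\ref{lemma:cayley} and the dimension bound $f^\alpha\geq c_1 n^2$), so Theorem~\ref{thm:crosshoffman} gives $|\mathcal{A}_{1\mapsto 1}|\,|\mathcal{A}_{1\mapsto k}|\leq((n-2)!)^2$ exactly, and your quadratic bootstrap goes through as written.

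One cosmetic point: the trivialising reduction to $c\geq n^{-1/12}$ needs only $K_c\geq c^{-12}$ here (via the Deza--Frankl bound $|\mathcal{A}|\leq (n-1)!$), rather than the $c^{-26}$ used in the $t=2$ case.
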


Applying this theorem to \(\mathcal{A}_{1\mapsto 1}'\) (with ground set \(\{2,\ldots,n\}\)), we see that there exist \(i,j \geq 2\) such that all but at most \(O_{c}((n-3)!)\) permutations in \(\mathcal{A}_{1 \mapsto 1}\) map \(i\) to \(j\). Without loss of generality, we may assume that \(i=j=2\), so
$$|\mathcal{A}_{1 \mapsto 1} \setminus \mathcal{A}_{1 \mapsto 1,2\mapsto 2}| \leq O_c((n-3)!).$$
Combining this with (\ref{eq:almost}) yields
$$|\mathcal{A} \setminus \mathcal{A}_{1\mapsto 1,2\mapsto 2}| \leq O_c((n-3)!),$$
completing the proof of Theorem \ref{thm:roughstability}.
\end{proof}

We immediately obtain the following corollary.
\begin{corollary}
\label{corr:stab}
For any \(c > 1-1/e\), and any \(n\) sufficiently large depending on \(c\), if \(\mathcal{A} \subset S_{n}\) is a family of permutations with no singleton intersection and with \(|\mathcal{A}| \geq c(n-2)!\), then there exist \(i,j,k\) and \(l\) such that every permutation in \(\mathcal{A}\) maps \(i\) to \(j\) and \(k\) to \(l\), i.e. \(\mathcal{A}\) is contained within a 2-coset of \(S_{n}\).
\end{corollary}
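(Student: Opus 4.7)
The plan is to apply Theorem \ref{thm:roughstability} to extract a candidate $2$-coset containing almost all of $\mathcal{A}$, and then use a rencontres-type count to show that any permutation of $\mathcal{A}$ lying outside that coset would force the portion inside to be strictly smaller than $(1-1/e)(n-2)!$, contradicting $c>1-1/e$.

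Concretely, first apply Theorem \ref{thm:roughstability} to obtain $i,j,k,l\in[n]$ with $|\mathcal{A}\setminus\mathcal{A}_{i\mapsto j,\,k\mapsto l}|\le K_c(n-3)!$. After translating $\mathcal{A}$ on both sides by suitable permutations (an operation which preserves the sizes $|\sigma\cap\tau|$ for all pairs $\sigma,\tau$), we may assume $(i,j,k,l)=(1,1,2,2)$. Write $\mathcal{A}_0=\mathcal{A}_{1\mapsto 1,\,2\mapsto 2}$ and $\mathcal{A}_1=\mathcal{A}\setminus\mathcal{A}_0$, so that $|\mathcal{A}_0|\ge c(n-2)!-K_c(n-3)!$. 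The goal is to prove $\mathcal{A}_1=\emptyset$; suppose for contradiction that some $\tau\in\mathcal{A}_1$ exists.

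Set $c_\tau:=|\{i\in\{1,2\}:\tau(i)=i\}|\in\{0,1\}$ (it cannot equal $2$ since $\tau\notin\mathcal{A}_0$), and for $\sigma\in\mathcal{A}_0$ write $f_\tau(\sigma):=|\{i\ge 3:\sigma(i)=\tau(i)\}|$. Then $|\sigma\cap\tau|=c_\tau+f_\tau(\sigma)$, so the no-singleton-intersection hypothesis forces $f_\tau(\sigma)\ne 1-c_\tau$ for every $\sigma\in\mathcal{A}_0$. The crucial step is to show that
\[ B_\tau:=\{\sigma\in S_n:\sigma(1)=1,\ \sigma(2)=2,\ f_\tau(\sigma)=1-c_\tau\} \]
satisfies $|B_\tau|\ge(1/e-o(1))(n-2)!$ uniformly in $\tau$. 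Since $1-c_\tau\in\{0,1\}$, this is a rencontres-type count for permutations of $\{3,\dots,n\}$ measured against the (partial) injection $\tau|_{\{3,\dots,n\}}$; a direct inclusion--exclusion, whose leading series is $\sum_{k\ge 0}(-1)^k/k!=1/e$, yields the asymptotic. The sub-cases (whether $\tau$ fixes exactly one of $\{1,2\}$ or neither) only change the number $m=|\{i\ge 3:\tau(i)\in\{3,\dots,n\}\}|$ of ``agreement-possible'' positions by a bounded constant, which does not affect the leading $1/e$ factor.

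Given this estimate, $\mathcal{A}_0\cap B_\tau=\emptyset$ yields $|\mathcal{A}_0|\le (n-2)!-|B_\tau|\le(1-1/e+o(1))(n-2)!$; combined with $|\mathcal{A}_0|\ge c(n-2)!-K_c(n-3)!$ and $c>1-1/e$, this is a contradiction for $n$ sufficiently large depending on $c$. Hence $\mathcal{A}_1=\emptyset$ and $\mathcal{A}\subseteq\{\sigma:\sigma(1)=1,\,\sigma(2)=2\}$, which is a $2$-coset of $S_n$. The main obstacle is the uniform rencontres estimate $|B_\tau|\ge(1/e-o(1))(n-2)!$: this requires a small case analysis on the values $\tau(1),\tau(2)$ (in particular whether they lie in $\{1,2\}$), but each case reduces to the same elementary inclusion--exclusion computation, so the work is essentially bookkeeping.
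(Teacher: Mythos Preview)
Your proof is correct and follows essentially the same route as the paper: apply Theorem~\ref{thm:roughstability}, reduce by translation to the coset $\{\sigma:\sigma(1)=1,\sigma(2)=2\}$, assume a bad $\tau$ exists, and obtain a contradiction by observing that the set of permutations in this coset agreeing with $\tau$ at exactly one point has size $(1/e+o(1))(n-2)!$. The paper simply asserts this last count is ``easy to check'', whereas you have (correctly) sketched the rencontres/inclusion--exclusion argument and the small case analysis on $c_\tau$ needed to verify it.
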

\begin{proof}
By Theorem \ref{thm:roughstability}, there exist \(i,j,k\) and \(l\) such that
\[|\mathcal{A} \setminus \mathcal{A}_{i \mapsto j,k \mapsto l}| \leq O((n-3)!).\]
Without loss of generality, \(i=j=1\) and \(k=l=2\), so
\[|\mathcal{A} \setminus \mathcal{A}_{1 \mapsto 1,2 \mapsto 2}| \leq O((n-3)!).\]
Suppose for a contradiction that \(\mathcal{A}\) contains a permutation \(\tau\) not fixing both \(1\) and \(2\). It is easy to check that the number of permutations fixing both \(1\) and \(2\) and agreeing with \(\tau\) at exactly one point is
\[(1/e+o(1))(n-2)!.\]
No such permutation can be in \(\mathcal{A}\), so
\[|\mathcal{A}| = |\mathcal{A}_{1 \mapsto 1,2 \mapsto 2}|+|\mathcal{A} \setminus \mathcal{A}_{1 \mapsto 1,2 \mapsto 2}| \leq (1-1/e+o(1))(n-2)!+O((n-3)!),\]
a contradiction provided \(n\) is sufficiently large depending on \(c\).
\end{proof}

Corollary \ref{corr:stab} clearly implies our main theorem, Theorem \ref{thm:main}.

\section{A Hilton-Milner type result}
In this section, we will use Theorem \ref{thm:roughstability} to prove the following Hilton-Milner type result.
\begin{theorem}
\label{thm:hm}
For \(n\) sufficiently large, if \(\mathcal{A} \subset S_{n}\) is a family of permutations with no singleton intersection, and \(\mathcal{A}\) is not contained in a 2-coset of \(S_{n}\), then \(|\mathcal{A}| \leq |\mathcal{B}|\), where
\begin{align*}
\mathcal{B} = & \{\sigma \in S_{n}:\ \sigma(1)=1,\sigma(2)=2,\ \#\{\textrm{fixed points of }\sigma \geq 5\} \neq 1\}\\
& \cup \{(1\ 3)(2\ 4),(1\ 4)(2\ 3),(1\ 3\ 2\ 4),(1\ 4\ 2\ 3)\}.
\end{align*}
Equality holds only if \(\mathcal{A}\) is a double translate of \(\mathcal{B}\), meaning that there exist \(\pi,\tau \in S_{n}\) such that \(\mathcal{A}=\pi \mathcal{B} \tau\).
\end{theorem}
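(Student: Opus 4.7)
The plan is to combine the rough stability result (Theorem \ref{thm:roughstability}) with a careful analysis of the ``outliers.''  Since $|\mathcal{B}| = (1 - 1/e + o(1))(n-2)!$, which exceeds $c(n-2)!$ for any constant $c < 1-1/e$ and $n$ large, any $\mathcal{A}$ with $|\mathcal{A}| \geq |\mathcal{B}|$ satisfies the hypothesis of Theorem \ref{thm:roughstability}. Hence there exist $i,j,k,l \in [n]$ with $|\mathcal{A} \setminus \mathcal{A}_{i \mapsto j, k \mapsto l}| = O((n-3)!)$. Since the ``no singleton intersection'' property is preserved under double translates $\mathcal{A} \mapsto \pi \mathcal{A} \tau$, I may assume $i=j=1$ and $k=l=2$. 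Write $\mathcal{A}_0 := \mathcal{A}_{1 \mapsto 1,\, 2 \mapsto 2}$ and $\mathcal{A}_1 := \mathcal{A} \setminus \mathcal{A}_0$; since $\mathcal{A}$ is not contained in any 2-coset, $\mathcal{A}_1 \neq \emptyset$.

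For each $\tau \in \mathcal{A}_1$, define $F(\tau) := \{\sigma \in S_n : \sigma(1)=1,\ \sigma(2)=2,\ |\sigma \cap \tau| = 1\}$. Then $\mathcal{A}_0 \cap F(\tau) = \emptyset$ for every $\tau \in \mathcal{A}_1$, giving
\[|\mathcal{A}| = |\mathcal{A}_0| + |\mathcal{A}_1| \leq (n-2)! + |\mathcal{A}_1| - \Big|\bigcup_{\tau \in \mathcal{A}_1} F(\tau)\Big|.\]
Let $\mathcal{N}$ be the set of $\sigma$ fixing $1$ and $2$ with exactly one fixed point in $\{5,\ldots,n\}$, and $N^* := |\mathcal{N}|$; then $|\mathcal{B}| = (n-2)! + 4 - N^*$. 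It therefore suffices to prove
\[\Big|\bigcup_{\tau \in \mathcal{A}_1} F(\tau)\Big| \geq N^* + |\mathcal{A}_1| - 4,\]
with equality characterising the extremal family.

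The crux is a direct calculation: for each of the four permutations $\tau_0 \in \mathcal{B}_1 := \{(1\ 3)(2\ 4),(1\ 4)(2\ 3),(1\ 3\ 2\ 4),(1\ 4\ 2\ 3)\}$, one has $F(\tau_0) = \mathcal{N}$. Indeed, each such $\tau_0$ sends $\{1,2\}$ into $\{3,4\}$ and $\{3,4\}$ into $\{1,2\}$, so no $\sigma$ fixing $1$ and $2$ can agree with $\tau_0$ at any point of $\{1,2,3,4\}$; and $\tau_0$ fixes $\{5,\ldots,n\}$ pointwise, so the agreements with $\sigma$ are precisely the fixed points of $\sigma$ in $\{5,\ldots,n\}$. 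Consequently, if $\mathcal{A}_1 \subseteq \mathcal{B}_1$ then $\bigcup_\tau F(\tau) = \mathcal{N}$, $|\mathcal{A}_1| \leq 4$, and $|\mathcal{A}| \leq (n-2)! + 4 - N^* = |\mathcal{B}|$, with equality forcing $\mathcal{A}_1 = \mathcal{B}_1$ and $\mathcal{A}_0 = \mathcal{B}_0$, i.e.\ $\mathcal{A} = \mathcal{B}$.

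What remains is to establish strict inequality $|\mathcal{A}| < |\mathcal{B}|$ whenever $\mathcal{A}_1 \not\subseteq \mathcal{B}_1$. I would case-analyse $\tau \in \mathcal{A}_1 \setminus \mathcal{B}_1$, using the reformulation obtained from $|\sigma \cap \tau| = \#\mathrm{Fix}(\sigma\tau^{-1})$: the substitution $\rho = \sigma\tau^{-1}$ shows that $|F(\tau)|$ equals the number of $\rho \in S_n$ with $\rho(\tau(1)) = 1$, $\rho(\tau(2)) = 2$ and exactly one fixed point. A routine inclusion-exclusion then shows that if $\tau$ fixes exactly one of $\{1,2\}$, or if $\{\tau(1),\tau(2)\}$ or $\{\tau^{-1}(1),\tau^{-1}(2)\}$ differs from $\{3,4\}$ as an unordered pair, then $|F(\tau)| \geq N^* + \Omega((n-3)!)$, comfortably exceeding the available slack $|\mathcal{A}_1| - 4 = O((n-3)!)$. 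The delicate case, and the main obstacle, is when $\tau$ swaps $\{1,2\}\leftrightarrow\{3,4\}$ setwise but acts non-trivially on $\{5,\ldots,n\}$: here $|F(\tau)| = N^*$ exactly, but $F(\tau) \neq \mathcal{N}$. To close the gap one exploits that $\mathcal{A}_1$ itself has no singleton intersection, which severely constrains the restrictions $\tau|_{\{5,\ldots,n\}}$ of such ``near-special'' outliers; a double-counting argument should then show that each such $\tau$ contributes at least one new element to $\bigcup F(\tau) \setminus \mathcal{N}$ (in fact many, by a derangement-style count), yielding $|\bigcup F| \geq N^* + |\mathcal{A}_1 \setminus \mathcal{B}_1|$ and hence strict inequality. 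Tracing through the equality analysis identifies the extremal families as precisely the double translates of $\mathcal{B}$.
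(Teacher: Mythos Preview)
Your overall framework is sound, but the quantitative claim in the middle of your case analysis is false, and the gap is fatal. You assert that for $\tau$ outside the ``delicate case'', one has $|F(\tau)| \geq N^* + \Omega((n-3)!)$. This is wrong on two counts. First, $|F(\tau)|$ depends only on the pair $(\tau(1),\tau(2))$ up to the action of permutations fixing $1$ and $2$; in particular, whenever $\{\tau(1),\tau(2)\}\cap\{1,2\}=\emptyset$ one has $|F(\tau)|=N^*$ exactly, regardless of whether $\{\tau(1),\tau(2)\}=\{3,4\}$ or whether $\tau$ acts trivially on $\{5,\ldots,n\}$. So your ``delicate case'' is in fact all of the paper's case~4. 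Second, for $\tau=(1\ 2)$ (case~1) one computes $|F(\tau)|=(n-2)d_{n-3}$, and using $d_{n-3}=(n-4)(d_{n-4}+d_{n-5})$ one finds $|F(\tau)|-N^*=(n-4)d_{n-5}=\Theta((n-4)!)$, not $\Omega((n-3)!)$; the same order holds for case~3. Since Theorem~\ref{thm:roughstability} only gives $|\mathcal{A}_1|=O((n-3)!)$, a single $F(\tau)$ of size $N^*+\Theta((n-4)!)$ cannot supply the $N^*+|\mathcal{A}_1|-4$ you need, and your argument collapses precisely when $|\mathcal{A}_1|$ is of order $(n-3)!$.

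The paper closes this gap by a bootstrap you are missing. In each of cases~1--3, the containment $\mathcal{A}_0\subset\mathcal{G}_j$ together with $|\mathcal{A}|\geq|\mathcal{B}|$ forces $|\mathcal{A}_1|\geq d_{n-4}+4$; hence $\mathcal{A}_1$ is large enough that it must contain some $\pi$ with at most $\lfloor n/2\rfloor$ fixed points (there are too few permutations with more). For such a $\pi$, the paper exhibits a set $\mathcal{H}\subset\mathcal{G}_j$ of size $\Omega((n-2)!)$ with every $\sigma\in\mathcal{H}$ satisfying $|\sigma\cap\pi|=1$: crucially, $\mathcal{H}$ is built from permutations with at least two fixed points in $\{i\geq 5:\pi(i)\neq i\}$, guaranteeing both $\mathcal{H}\subset\mathcal{G}_j$ and $\mathcal{H}\cap\mathcal{A}_0=\emptyset$. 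This yields $|\mathcal{A}_0|\leq|\mathcal{G}_j|-\Omega((n-2)!)$, contradicting $|\mathcal{A}|\geq|\mathcal{B}|$. The same bootstrap (Claim~\ref{claim:fix}) is used in case~4 to force every $\tau\in\mathcal{A}_1$ to fix $\{5,\ldots,n\}$ pointwise. Your ``each $\tau$ contributes at least one new element'' heuristic is not enough: you need to find a $\tau$ that contributes $\Omega((n-2)!)$ new elements, and for that you must first argue that $\mathcal{A}_1$ is large and then pick a $\tau$ with few fixed points.
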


\begin{proof}
Let \(\mathcal{A}\) be a family of maximum size satisfying the conditions of Theorem \ref{thm:hm}. Observe that \(\mathcal{B}\) satisfies them, and has size
\[(n-2)!-(n-4)(d_{n-3}+2d_{n-4}+d_{n-5})+4 = (1-1/e+o(1))(n-2)!.\]

To see this, note that the number of permutations fixing 1 and 2 and with exactly one fixed point \(\geq 5\) is
\[(n-4)(d_{n-3}+2d_{n-4}+d_{n-5}).\]
Indeed, there are \(n-4\) choices of a number \(p \geq 5\) to fix, and \(d_{n-3}+2d_{n-4}+d_{n-5}\) permutations whose fixed points are 1, 2, \(p\) and the numbers in some subset of \(\{3,4\}\).

Therefore, by our assumption of the maximality of \(|\mathcal{A}|\),
\begin{equation}
\label{eq:lowerboundA}
|\mathcal{A}| \geq |\mathcal{B}| = (1-1/e+o(1))(n-2)!.
\end{equation}
By Theorem \ref{thm:roughstability}, there exist \(i,j,k\) and \(l\) such that
\[|\mathcal{A} \setminus \mathcal{A}_{i \mapsto j,k \mapsto l}| \leq O((n-3)!).\]
By double translation, we may assume that \(i=j=1\) and \(k=l=2\), so
\[|\mathcal{A} \setminus \mathcal{A}_{1 \mapsto 1,2 \mapsto 2}| \leq O((n-3)!).\]
By assumption, \(\mathcal{A}\) must contain some permutation \(\tau\) not fixing both 1 and 2. There are four possibilities:
\begin{enumerate}
\item \(\tau(1)=2,\ \tau(2)=1\);
\item \(\tau\) fixes 1 or 2;
\item \(\tau\) does not fix 1 or 2, but \(|\tau\{1,2\} \cap \{1,2\}|=1\);
\item \(\tau\{1,2\} \cap \{1,2\} = \emptyset\);
\end{enumerate}

First, we obtain bounds on \(|\mathcal{A}_{1\mapsto 1,2 \mapsto 2}|\) in each of the cases 1-3.

In case 1, by considering the translated family \(\mathcal{A}\tau^{-1}(1\ 2)\), we may assume that \(\tau = (1\ 2)\). Hence, \(\mathcal{A}_{1\mapsto 1,2\mapsto 2}\) cannot contain any permutation in the family
\[\mathcal{F}_{1} = \{\sigma \in S_{n}:\ \sigma(1)=1,\ \sigma(2)=2,\ \#\{\textrm{fixed points of }\sigma \geq 3\} = 1\},\]
since every \(\sigma \in \mathcal{F}_{1}\) has \(|\sigma \cap (1\ 2)|=1\). Observe that
\[|\mathcal{F}_{1}|=(n-2)d_{n-3},\]
the number of permutations of \(\{3,4,\ldots,n\}\) with exactly one fixed point. Let
\[\mathcal{G}_{1} = \{\sigma \in S_{n}: \sigma(1)=1,\ \sigma(2)=2\} \setminus \mathcal{F}_{1};\]
then
\[\mathcal{A}_{1 \mapsto 1,2 \mapsto 2} \subset \mathcal{G}_{1},\]
and therefore
\[|\mathcal{A}_{1\mapsto 1,2\mapsto 2}| \leq |\mathcal{G}_1| = (n-2)!-(n-2)d_{n-3}.\]

In case 2, we may assume that \(\tau(2)=2,\ \tau(1)=3\). Then, by considering the translated family \( \mathcal{A}\tau^{-1}(1\ 3)\), we may assume that \(\tau = (1\ 3)\). Hence, \(\mathcal{A}_{1\mapsto 1,2\mapsto 2}\) cannot contain any permutation in the family
\[\mathcal{F}_{2} = \{\sigma \in S_{n}:\ \sigma(1)=1,\ \sigma(2)=2,\ \sigma\textrm{ has no fixed points}\geq 4\},\]
since every \(\sigma \in \mathcal{F}_{2}\) has \(|\sigma \cap (1\ 3)|=1\). Observe that
\[|\mathcal{F}_{2}|=d_{n-2}+d_{n-3},\]
since there are \(d_{n-2}\) permutations whose fixed points are 1 and 2, and \(d_{n-3}\) permutations whose fixed points are 1, 2 and 3. Let
\[\mathcal{G}_{2} = \{\sigma \in S_{n}: \sigma(1)=1,\ \sigma(2)=2\} \setminus \mathcal{F}_{2};\]
then
\[\mathcal{A}_{1 \mapsto 1,2 \mapsto 2} \subset \mathcal{G}_{2},\]
and therefore
\[|\mathcal{A}_{1\mapsto 1,2\mapsto 2}| \leq |\mathcal{G}_{2}| = (n-2)!-d_{n-2}-d_{n-3}.\]

In case 3, we may assume that \(\tau(1)=2\) and \(\tau(2)=3\). Then, by considering the translated family \(\mathcal{A}\tau^{-1}(1\ 2\ 3)\), we may assume that \(\tau = (1\ 2\ 3)\). Hence, \(\mathcal{A}_{1\mapsto 1,2\mapsto 2}\) cannot contain any permutation in the family
\[\mathcal{F}_{3} = \{\sigma \in S_{n}:\ \sigma(1)=1,\ \sigma(2)=2,\ \sigma\textrm{ has exactly one fixed point}\geq 4\},\]
since every \(\sigma \in \mathcal{F}_{3}\) has \(|\sigma \cap (1\ 2\ 3)|=1\). Observe that
\[|\mathcal{F}_{3}|=(n-3)(d_{n-3}+d_{n-4}),\]
since there are \(n-3\) choices of a point \(i \geq 4\) to fix, \(d_{n-3}\) permutations whose fixed points are 1, 2 and \(i\), and \(d_{n-4}\) permutations whose fixed points are 1, 2, 3 and \(i\). Let
\[\mathcal{G}_{3} = \{\sigma \in S_{n}: \sigma(1)=1,\ \sigma(2)=2\} \setminus \mathcal{F}_{3};\]
then
\[\mathcal{A}_{1 \mapsto 1,2 \mapsto 2} \subset \mathcal{G}_{3},\]
and therefore
\[|\mathcal{A}_{1\mapsto 1,2\mapsto 2}| \leq |\mathcal{G}_{3}| = (n-2)!-(n-3)(d_{n-3}+d_{n-4}).\]

It is easily checked that for each \(j \leq 3\),
\begin{equation}\label{eq:boundG} |\mathcal{G}_{j}| \leq |\mathcal{B}|-4-d_{n-4} = (1-1/e+o(1))(n-2)!.\end{equation}
Since in case $j$, we have shown that \(\mathcal{A} \subset \mathcal{G}_{j}\) (for $j=1,2,3$), it follows from (\ref{eq:lowerboundA}) that
\begin{equation}
\label{eq:toomanyothers}
|\mathcal{A} \setminus \mathcal{A}_{1 \mapsto 1,2 \mapsto 2}| \geq d_{n-4}+4 = (1/e+o(1))(n-4)!.
\end{equation}
We now show that this leads to a contradiction. The number of permutations in \(S_{n}\) with at least \(\lfloor n/2 \rfloor\) fixed points is at most
\[{n \choose \lfloor n/2 \rfloor} (\lceil n/2 \rceil)! = n!/(\lfloor n / 2 \rfloor)! = o((n-4)!),\]
so the number of permutations in \(\mathcal{A} \setminus \mathcal{A}_{1 \mapsto 1,2 \mapsto 2}\) with less than \(\lfloor n / 2 \rfloor\) fixed points is at least \((1/e-o(1))(n-4)!\). Choose one such permutation \(\pi \in \mathcal{A} \setminus \mathcal{A}_{1 \mapsto 1,2 \mapsto 2}\). We now prove the following.

\begin{claim}
\label{claim:H}
In each of the cases 1-3, if there exists $\pi \in \mathcal{A}\setminus \mathcal{A}_{1 \mapsto 1,2 \mapsto 2}$ with at most $\lfloor n/2 \rfloor$ fixed points, then
$$|\mathcal{A}_{1\mapsto 1,2 \mapsto 2}| \leq (1-1/e - 1/(8e)+o(1))(n-2)!.$$
\end{claim}
\begin{proof}[Proof of claim.]
Let
\[N(\pi) = \{i \geq 5:\ \pi(i) \neq i\};\]
then \(|N(\pi)| \geq n-4-\lfloor n /2 \rfloor = \lceil n/2 \rceil -4.\)

Let
\[\mathcal{H} = \{\sigma \in S_{n}:\ \sigma \textrm{ fixes 1, 2, and at least two points in } N(\pi),\ \textrm{and}\ |\sigma \cap \pi|=1\}.\]
Observe that \(\mathcal{H} \subset \mathcal{G}_{j}\), since each \(\sigma \in \mathcal{H}\) has at least two fixed points \(\geq 4\), and that \(\mathcal{A} \cap \mathcal{H} = \emptyset\). If \(\pi\) fixes 1 or 2, then \(\mathcal{H}\) is the set of permutations fixing 1,2, and at least two points in \(N(\pi)\), and disagreeing with \(\pi\) at every point \(\geq 3\), so has size  
\[|\mathcal{H}| \geq {|N(\pi)|\choose 2} d_{n-4} \geq \tfrac{1}{8}(1/e-o(1))(n-2)!.\]
If \(\pi\) does not fix 1 or 2, then \(\mathcal{H}\) is the set of permutations fixing 1,2 and at least two points in \(N(\pi)\), and agreeing with \(\pi\) at exactly one point \(\geq 3\), so we have
\[|\mathcal{H}| \geq {|N(\pi)|\choose 2} (n-6)d_{n-5} \geq \tfrac{1}{8}(1/e-o(1))(n-2)!.\]
Since $\mathcal{A}_{1 \mapsto 1,2\mapsto 2} \subset \mathcal{G}_j$, it follows that
\begin{align*} |\mathcal{A}_{1 \mapsto 1,2\mapsto 2}| & \leq |\mathcal{G}_j| - |\mathcal{H}|\\
& \leq (1-1/e+o(1))(n-2)! - \tfrac{1}{8}(1/e - o(1))(n-2)!\\
& = (1-1/e-1/(8e)+o(1))(n-2)!,\end{align*}
as required.
\end{proof}

Since \(|\mathcal{A} \setminus \mathcal{A}_{1 \mapsto 1,2 \mapsto 2}| \leq O((n-3)!)\), it follows from the above claim that
\begin{align*} |\mathcal{A}| & \leq (1-1/e-1/(8e)+o(1))(n-2)!+O((n-3)!)\\
& = (1-1/e-1/(8e)+o(1))(n-2)!,\end{align*}
contradicting (\ref{eq:lowerboundA}).

Hence, we may assume that none of the cases 1-3 occur, so case 4 must occur, i.e.
\begin{equation}
\label{eq:12condition}
\tau\{1,2\} \cap \{1,2\} = \emptyset\quad \forall \tau \in \mathcal{A} \setminus \mathcal{A}_{1 \mapsto 1,2 \mapsto 2}.
\end{equation}
Take any \(\tau_0 \in \mathcal{A} \setminus \mathcal{A}_{1 \mapsto 1,2 \mapsto 2}\). Without loss of generality, we may assume that \(\tau_0(1)=3\) and \(\tau_0(2)=4\). Then, by considering the translated family \( \mathcal{A}\tau_0^{-1}(1\ 3)(2 \ 4)\), we may assume that \(\tau_0 = (1\ 3)(2\ 4)\). Hence, \(\mathcal{A}_{1\mapsto 1,2\mapsto 2}\) cannot contain any permutation in the family
\[\mathcal{F}_{4} = \{\sigma \in S_{n}:\ \sigma(1)=1,\ \sigma(2)=2,\ \#\{\textrm{fixed points of }\sigma \geq 5\} = 1\},\]
since every \(\sigma \in \mathcal{F}_{4}\) has \(|\sigma \cap (1\ 3)(2\ 4)|=1\). As observed above,
\[|\mathcal{F}_{4}|=(n-4)(d_{n-3}+2d_{n-4}+d_{n-5}).\]
Let
\[\mathcal{G}_{4} = \{\sigma \in S_{n}: \sigma(1)=1,\ \sigma(2)=2\} \setminus \mathcal{F}_{4};\]
then
\begin{equation}
\label{eq:containmentfor1122}
\mathcal{A}_{1 \mapsto 1,2 \mapsto 2} \subset \mathcal{G}_{4},
\end{equation}
and therefore
\begin{align*} |\mathcal{A}_{1\mapsto 1,2\mapsto 2}| & \leq |\mathcal{G}_4|\\
& = (n-2)!-(n-4)(d_{n-3}+2d_{n-4}+d_{n-5})\\
& = (1-1/e+o(1))(n-2)!.\end{align*}
We now prove the following.

\begin{claim}
\label{claim:fix}
Every permutation in \(\mathcal{A} \setminus \mathcal{A}_{1 \mapsto 1,2 \mapsto 2}\) must fix \(\{5,6,\ldots,n\}\) pointwise.
\end{claim}
\begin{proof}[Proof of claim.]
Suppose for a contradiction that some permutation \(\rho \in \mathcal{A}\setminus \mathcal{A}_{1 \mapsto 1,2 \mapsto 2}\) does not fix \(\{5,6,\ldots,n\}\) pointwise; without loss of generality, we may assume that \(\rho(5) \neq 5\). We assert that this forces \(\mathcal{A} \setminus \mathcal{A}_{1 \mapsto 1,2 \mapsto 2}\) to contain a permutation \(\pi\) with at most \(\lfloor n / 2\rfloor\) fixed points.

Indeed, if \(\rho\) itself has at most \(\lfloor n / 2\rfloor\) fixed points, we are done already, so we may assume that $\rho$ has more than \(\lfloor n / 2\rfloor\) fixed points. Let
\[F(\rho) = \{i \geq 5:\ \rho(i)=i\}.\]
We have \(|F(\rho)| > \lfloor n/2\rfloor -4\).

Consider the family
\[\mathcal{M} = \{\sigma \in S_{n}:\ \sigma \textrm{ fixes 1, 2, 5 and some } i \in F(\rho),\ \sigma(i) \neq \rho(i)\ \textrm{for all other }i \geq 3\}.\]
Observe that \(\mathcal{M} \subset \mathcal{G}_{4}\), since each permutation in \(\mathcal{G}\) has at least two fixed points \(\geq 5\). However, each \(\sigma \in \mathcal{M}\) has \(|\sigma \cap \rho|=1\), since by (\ref{eq:12condition}), \(\rho\) cannot fix 1 or 2. Hence,
\[\mathcal{A}_{1 \mapsto 1,2 \mapsto 2} \subset \mathcal{G}_{4} \setminus \mathcal{M}.\]
Observe that
\[|\mathcal{M}| \geq |F(\rho)| d_{n-4} \geq \tfrac{1}{2}(1/e-o(1))(n-3)!,\]
so by (\ref{eq:lowerboundA}), we see that
\[|\mathcal{A} \setminus \mathcal{A}_{1 \mapsto 1,2 \mapsto 2}| \geq \tfrac{1}{2}(1/e-o(1))(n-3)!.\]
As observed above, there are at most \(n!/(\lfloor n / 2 \rfloor)!\) permutations with at least \(\lfloor n/2 \rfloor\) fixed points, so there must be some permutation \(\pi \in \mathcal{A} \setminus \mathcal{A}_{1 \mapsto 1,2 \mapsto 2}\) with at most \(\lfloor n / 2\rfloor\) fixed points, as asserted.

By exactly the same argument as in the proof of Claim \ref{claim:H}, this implies that
\begin{align*} |\mathcal{A}_{1\mapsto 1,2\mapsto 2}| & \leq |\mathcal{G}_4| - \tfrac{1}{8} (1/e +o(1))(n-2)!\\
& = (1-1/e+o(1))(n-2)! - \tfrac{1}{8}(1/e+o(1))(n-2)!\\
& = (1-1/e-1/(8e)+o(1))(n-2)!,\end{align*}
and therefore
\begin{align*} |\mathcal{A}| & \leq (1-1/e-1/(8e)+o(1))(n-2)!+|\mathcal{A}\setminus\mathcal{A}_{1 \mapsto 1,2 \mapsto 2}|\\
& \leq (1-1/e-1/(8e)+o(1))(n-2)! + O((n-3)!)\\
& \leq (1-1/e-1/(8e)+o(1))(n-2)!,\end{align*}
contradicting (\ref{eq:lowerboundA}). The claim is proved.
\end{proof}

Claim \ref{claim:fix} says that
\[\mathcal{A} \setminus \mathcal{A}_{1 \mapsto 1, 2\mapsto 2} \subset S_{[4]}.\]
By (\ref{eq:12condition}), we have
\[\mathcal{A} \setminus \mathcal{A}_{1 \mapsto 1,2 \mapsto 2} \subset \{(1 \ 3)(2\ 4),(1\ 4)(2\ 3),(1\ 3\ 2\ 4),(1\ 4\ 3\ 2)\}.\]
Combining this with (\ref{eq:containmentfor1122}), we see that
\[\mathcal{A} \subset \mathcal{G}_{4} \cup \{(1 \ 3)(2\ 4),(1\ 4)(2\ 3),(1\ 3\ 2\ 4),(1\ 4\ 3\ 2)\} = \mathcal{B}.\]
Hence, by the maximality of \(\mathcal{A}\), \(\mathcal{A} = \mathcal{B}\), proving the theorem.
\end{proof}

\end{document}